\documentclass[11pt,leqno]{article}
\usepackage{eurosym}
\usepackage{amsfonts}
\usepackage[a4paper,margin=1in]{geometry}
\usepackage{amsmath,amsthm,amssymb,mathrsfs}
\usepackage{hyperref}
\usepackage{enumitem}
\usepackage{graphicx}
\usepackage{caption}
\usepackage{subcaption}
\usepackage{float}
\usepackage{changepage}
\usepackage{listings}
\usepackage{xcolor}
\usepackage{booktabs}
\usepackage{multirow}
\usepackage{array}

\definecolor{codegreen}{rgb}{0,0.6,0}
\definecolor{codegray}{rgb}{0.5,0.5,0.5}
\definecolor{codepurple}{rgb}{0.58,0,0.82}
\definecolor{backcolour}{rgb}{0.95,0.95,0.92}
\UseRawInputEncoding
\theoremstyle{plain}
\newtheorem{theorem}{Theorem}[section]

\newtheorem{proposition}[theorem]{Proposition}
\newtheorem{corollary}[theorem]{Corollary}
\theoremstyle{definition}
\newtheorem{definition}[theorem]{Definition}
\newtheorem{assumption}[theorem]{Assumption}
\newtheorem{remark}[theorem]{Remark}

\newtheorem{problem}[theorem]{Problem}
\newtheorem{conjecture}[theorem]{Conjecture}
\numberwithin{equation}{section}
\begin{document}

\title{Optimal Production Planning Under Macroeconomic Regime Switches}
\author{Dragos-Patru Covei \\
%EndAName
{\small The Bucharest University of Economic Studies}\\
{\small Department of Applied Mathematics}\\
{\small Piata Romana 1, 1st district, Postal Code: 010374, Romania}\\
{\small e-mail: \texttt{coveidragos@yahoo.com}}}
\date{}
\maketitle

\begin{abstract}
We develop a comprehensive mathematical and computational framework for optimal production planning in economies governed by stochastic regime switches driven by a continuous-time Markov chain. The value functions of the underlying stochastic control problem satisfy a weakly coupled system of quasilinear elliptic Hamilton--Jacobi--Bellman (HJB) equations. We establish a global multi-regime comparison principle and prove the existence and uniqueness of classical solutions under optimal growth conditions, generalizing the scalar framework to the multi-regime setting. Furthermore, we derive exact, radially symmetric quadratic solutions for both the scalar and the fully coupled HJB systems, providing explicit, dimension-free representations of the optimal value functions and production policies. These theoretical results are applied to a two-regime economy to analyze time consistency and the sensitivity of optimal responses to macroeconomic parameters. The article is supplemented by numerical implementations that validate the analytical findings and demonstrate the regime-switching dynamics.
\end{abstract}

\medskip \noindent \textbf{Mathematics Subject Classification (2020):}
Primary 35J62, 49L25, 93E20; Secondary 35B09, 91B55, 60J27.

\noindent \textbf{Keywords:} quasilinear HJB systems, regime-switching,
stochastic optimal control, production planning, radially symmetric
solutions, comparison principle, verification theorem, time consistency,
subgame perfection.

%==========================================================================
\section{Introduction}

\label{sec:intro} 
%==========================================================================

\subsection{Economic motivation}

The design of optimal production and inventory management strategies under
uncertainty is a central problem in operations research and mathematical
economics. Beginning with the seminal contributions of Bensoussan, Sethi,
Vickson, and Derzko~\cite{Bensoussan1984}, who formulated the stochastic
production planning problem as an infinite-horizon discounted control
problem, the field has evolved along two parallel axes: the deepening of the
analytical theory underlying the associated partial differential equations,
and the enrichment of the economic models to capture increasingly realistic
features of the macroeconomic environment.

In practice, firms do not operate in a single, unchanging economic
environment. Macroeconomic conditions alternate between distinct
phases---expansions characterized by moderate volatility and favorable
demand, and recessions marked by heightened uncertainty, elevated input
costs, and contracted margins. These shifts are typically abrupt and driven
by aggregate phenomena (monetary policy changes, financial crises, commodity
price shocks) rather than by the actions of any individual firm. The
econometric literature, following the influential work of Hamilton~\cite%
{HAMILTON1989}, has established that Markov-switching models provide a
parsimonious and empirically accurate description of such business-cycle
dynamics (see also Kim and Halbert~\cite{KIM_NELSON1999} and Diebold, Lee,
and Weinbach~\cite{DIEBOLD1994}).

From the perspective of optimal control, this observation leads naturally to
regime-switching diffusion models: the state variable (e.g., inventory
level) evolves as a diffusion process whose drift and volatility parameters
depend on the current regime, while the regime itself is governed by an
independent continuous-time Markov chain. The value function of such a
problem satisfies not a single partial differential equation, but a \emph{%
weakly coupled system} of PDEs, one for each regime. This system is the
primary object of study in the present article.

\subsection{The Hamilton--Jacobi--Bellman system}

Specifically, we consider the quasilinear elliptic system 
\begin{equation}
\left\{ 
\begin{array}{ll}
\displaystyle -\frac{\sigma_{1}^{2}}{2}\Delta u_{1}+\frac{1}{p_{1}} |\nabla
u_{1}|^{p_{1}}+\delta_{1}u_{1}-\sum_{\ell=1}^{k}\alpha_{1\ell}
u_{\ell}=f_{1}(x) & \text{in }\mathbb{R}^{N}, \\[8pt] 
\quad\vdots &  \\[4pt] 
\displaystyle -\frac{\sigma_{k}^{2}}{2}\Delta u_{k}+\frac{1}{p_{k}} |\nabla
u_{k}|^{p_{k}}+\delta_{k}u_{k}-\sum_{\ell=1}^{k}\alpha_{k\ell}
u_{\ell}=f_{k}(x) & \text{in }\mathbb{R}^{N},%
\end{array}
\right.  \label{eq:HJB}
\end{equation}
with $N\geq 1$, $p_{j}>1$ for all $j=1,\dots,k$, where $f_{j}\in C(\mathbb{R}%
^{N})$ denotes a running cost function modeling the instantaneous holding
and shortage cost in regime~$j$, $\sigma_{j}>0$ is the volatility in regime~$%
j$, $\delta_{j}>0$ is the regime-dependent discount factor, and $%
(\alpha_{j\ell})_{j,\ell=1}^{k}$ is the generator matrix of a
continuous-time Markov chain representing macroeconomic state transitions.
The functions $u_{j}(x)$ represent the (negated) optimal value functions
associated with starting inventory level $x\in\mathbb{R}^{N}$ and initial
regime $j$.

\subsection{Literature review and positioning}

The scalar version ($k=1$) of equation~\eqref{eq:HJB}, 
\begin{equation}
-\frac{1}{2}\Delta u+\frac{1}{p}|\nabla u|^{p}+u=f(x)\quad\text{in } \mathbb{%
R}^{N},\quad p>1,  \label{eq:HJBs}
\end{equation}
was treated in the landmark paper of Alvarez~\cite{ALVAREZ1996}, who
established a global comparison principle and proved existence and
uniqueness under optimal growth conditions involving a convex reference
function~$g$. The closely related ergodic setting was studied by Bensoussan
and Frehse~\cite{BENSOUSSAN1992} and later, in great generality, by Lasry
and Lions~\cite{LASRY1989}.

For weakly coupled systems of viscous Hamilton--Jacobi equations,
Arapostathis, Biswas, and Roychowdhury~\cite{ARAPOSTATHIS2022} established
fundamental results on the ergodic control problem, including the existence
of solutions to the associated PDE systems on bounded domains and
compactness estimates that are essential for the passage to the whole space.
Their techniques, adapted to the discounted setting, provide key ingredients
for our existence arguments.

Within the production planning literature, the connection between stochastic
inventory models and elliptic PDEs was explicitly exploited by Covei~\cite%
{COVEI2021JAAC}, who provided exact solutions for the scalar equation, and
later extended to the multi-regime format in~\cite{COVEI2023M} and~\cite%
{COVEI2025A}. The parabolic counterpart was investigated in~\cite%
{COVEI2022ERA}. The present paper constitutes a substantial advancement over
these works in several directions.

On the economic theory side, the literature on regime-switching models in
macroeconomics and finance is vast. Following Hamilton's pioneering
contribution~\cite{HAMILTON1989}, regime-switching models have been applied
to study interest rates (Ang and Bekaert~\cite{ANG_BEKAERT2002}), asset
pricing (Guidolin and Timmermann~\cite{GUIDOLIN2007}), and monetary policy
(Sims and Zha~\cite{SIMS_ZHA2006}). In optimal control under regime
switching, foundational contributions include the works of Nguyen, Yin and Zhu~\cite%
{YIN_ZHU2010}, who developed a comprehensive framework for continuous-time
Markov chain modulated stochastic systems, and Sethi and Zhang~\cite%
{SETHI_ZHANG1994}, who analyzed hierarchical production planning under
demand fluctuations governed by Markov chains. The concept of time
consistency and subgame perfection in dynamic optimization was formalized by
Ekeland and Pirvu~\cite{EkePir} in the context of investment and consumption
without commitment, and further studied by Bj\"{o}rk, Khapko, and Murgoci~%
\cite{BJORK2017} in the mean-variance portfolio selection problem.

\subsection{Main contributions}

The present article delivers the following original contributions:

\begin{enumerate}
\item \textbf{Global well-posedness for regime-switching HJB systems
(Section~\ref{sec:well-posedness}).} We comprehensively generalize the
scalar existence-uniqueness theory of Alvarez~\cite{ALVAREZ1996} to the
weakly coupled system~\eqref{eq:HJB}. This extension requires:

\begin{itemize}
\item A \emph{multi-regime comparison principle} (Theorem~\ref%
{thm:comparison_syst}) that exploits the cooperative structure of the
coupling and the convexity of the Hamiltonian, with a rigorous treatment of
the behavior at infinity through non-uniform radial growth barriers.

\item A \emph{constructive existence proof} (Theorem~\ref%
{thm:existence_syst_uniqueness}) based on the construction of explicit
radial super- and subsolutions, the solution of Dirichlet problems on
expanding bounded domains, and passage to the limit via Sobolev compactness
and elliptic regularity. Every computational step---including the precise
growth estimates for the barrier functions---is carried out in full.
\end{itemize}

\item \textbf{Derivation of exact invariant solutions (Sections~\ref%
{sec:scalar-exact} and~\ref{sec:system-exact}).} We discover and formalize
exact, radially symmetric quadratic solutions to \eqref{eq:HJB} in the
economically relevant case $p_{j}=2$ (which corresponds to quadratic control
costs). The scalar case (Theorem~\ref{thm:scalar-quadratic}) is derived in
full detail as a pedagogical foundation; the system case (Theorem~\ref%
{thm:system-quadratic}) is then treated completely, reducing the
infinite-dimensional PDE problem to a finite system of algebraic equations.
The dimension-free character of these solutions is emphasized and its
economic significance is discussed.

\item \textbf{Economic application and numerical implementation (Sections~%
\ref{sec:econ}--\ref{sec:sensitivity}).} We apply our PDE theory to a
two-regime stochastic production planning problem calibrated to stylized
business-cycle parameters. We derive the optimal feedback production rates
in closed form, simulate the resulting stochastic inventory trajectories,
and conduct a systematic sensitivity analysis with respect to:

\begin{itemize}
\item regime transition intensities (speed of business-cycle switching),

\item discount factors (firm patience across regimes),

\item volatility differentials (expansion vs.\ recession uncertainty),

\item cost asymmetries (quadratic holding cost differentials).
\end{itemize}

All numerical results are accompanied by economic interpretation and are
fully reproducible via the Python code in Appendix~\ref{ap:code}.

\item \textbf{Equilibrium characterization and time consistency (Section~\ref%
{sec:equilibrium}).} We prove that the optimal feedback policy derived from
the HJB system is time-consistent in the sense of subgame perfection
(Theorem~\ref{thm:time-consistency}), so that no commitment mechanism is
required for its implementation. This result is of direct practical
significance for decentralized supply chain management.

\item \textbf{Open problems (Section~\ref{sec:open}).} We formulate precise
conjectures (Conjectures~\ref{conj:1} and~\ref{conj:2}) regarding the
extension of Alvarez's $g$-convex growth framework to the system case,
identify the key structural obstacles, and establish a partial comparison
result under weakened hypotheses (Proposition~\ref{prop:weak-comparison}).
\end{enumerate}

\subsection{Structure of the paper}

The remainder of the article is organized as follows. Section~\ref%
{sec:setting} introduces the rigorous setting and the growth assumptions
that define the admissible class of solutions. Section~\ref{sec:stoch_inter}
develops the stochastic control model, derives the HJB system from dynamic
programming principles, and establishes the convex duality that yields the
gradient nonlinearity. Section~\ref{sec:well-posedness} forms the
theoretical core: it presents complete proofs of the global comparison
principle and existence--uniqueness for the coupled system. Section~\ref%
{sec:scalar-exact} derives the exact quadratic solution in the scalar case,
and Section~\ref{sec:system-exact} extends this to the full regime-switching
system. Section~\ref{sec:econ} formulates and solves the economic production
planning problem, while Section~\ref{sec:sensitivity} presents the
sensitivity analysis and numerical results with economic interpretation.
Section~\ref{sec:equilibrium} establishes the time consistency of the
optimal policies. Section~\ref{sec:open} formulates the open conjectures and
establishes partial results. Section~\ref{conclusion} collects conclusions
and perspectives. Appendix~\ref{ap:code} provides fully documented Python
implementations.

%==========================================================================

\section{Setting and assumptions}

\label{sec:setting} 
%==========================================================================

We begin by recalling the key structural assumptions of Alvarez~\cite%
{ALVAREZ1996} that guarantee existence, comparison, and uniqueness for the
scalar equation~\eqref{eq:HJBs}. We then formulate the corresponding
hypotheses for the coupled system~\eqref{eq:HJB}.

\subsection{The scalar framework}

\begin{assumption}[Growth and comparison framework for the scalar case]
\label{ass:growth} Let $p>1$ and define the conjugate exponent $q:=p/(p-1)$.
Assume:

\begin{enumerate}
\item \textbf{Continuity and convexity:} $f\in C(\mathbb{R}^{N})$ is convex.

\item \textbf{Asymptotic growth control:} There exist a convex function $%
g\geq 0$ and constants $C_{\varepsilon}\geq 0$ (for every $\varepsilon>0$)
such that 
\begin{equation}
(1-\varepsilon)g(x)-\varepsilon|x|^{q}-C_{\varepsilon} \;\leq\;f(x)\;\leq\;
(1+\varepsilon)g(x)+\varepsilon|x|^{q}+C_{\varepsilon} \quad\text{for all }%
x\in\mathbb{R}^{N}.  \label{eq:f-sandwich}
\end{equation}
A direct consequence is 
\begin{equation}
\liminf_{|x|\to\infty}f(x)\,|x|^{-q}\geq 0.  \label{eq:lowerf}
\end{equation}

\item \textbf{Admissible solution class:} We consider solutions $u\in W_{%
\mathrm{loc}}^{2,m}(\mathbb{R}^{N})$ for all $m>N$ satisfying 
\begin{equation}
\liminf_{|x|\to\infty}u(x)\,|x|^{-q}\geq 0.  \label{eq:loweru}
\end{equation}
\end{enumerate}
\end{assumption}

\begin{remark}[Economic interpretation of the growth conditions]
\label{rem:economic-growth} In the production planning context, $f(x)$
represents the instantaneous holding and shortage cost when the inventory
level is $x$. The assumption $f\geq 0$ is natural: maintaining any inventory
level entails a non-negative cost. The convexity of~$f$ reflects the
economic principle that both excess inventory (holding cost) and
insufficient inventory (shortage cost) are penalized, and that these
penalties accelerate as the deviation from the optimal level increases. The
growth condition~\eqref{eq:lowerf} ensures that the cost grows at least as
fast as the natural scale of the Hamiltonian, preventing arbitrage-like
solutions that would exploit infinite storage capacity.

The admissible growth condition~\eqref{eq:loweru} on the value function
excludes solutions that grow faster than the Hamiltonian's natural scale;
such solutions would correspond to strategies with economically unrealistic,
unbounded future costs.
\end{remark}

\subsection{The system framework}

For the coupled system~\eqref{eq:HJB}, we impose the following assumptions.

\begin{assumption}[Markov chain generator]
\label{ass:markov} The transition rate matrix $(\alpha_{j\ell})_{j,%
\ell=1}^{k}$ satisfies:

\begin{enumerate}
\item $\alpha_{j\ell}\geq 0$ for all $j\neq\ell$ (non-negative off-diagonal
entries),

\item $\alpha_{jj}=-\sum_{\ell\neq j}\alpha_{j\ell}$ for all $j=1,\dots,k$
(row sums equal zero),

\item the chain is irreducible.
\end{enumerate}
\end{assumption}

The irreducibility condition ensures that every regime is accessible from
every other regime, which is a standard and economically natural
requirement: in a sufficiently long time horizon, every phase of the
business cycle will eventually occur.

\begin{assumption}[System parameters]
\label{ass:system-params} For each $j=1,\dots,k$:

\begin{enumerate}
\item $\sigma_{j}>0$ (strictly positive volatility in every regime),

\item $\delta_{j}>0$ (strictly positive discount factor in every regime),

\item $p_{j}>1$ (superlinear gradient exponent), with conjugate $%
q_{j}:=p_{j}/(p_{j}-1)>1$.
\end{enumerate}
\end{assumption}

\begin{assumption}[Growth conditions on the cost functions]
\label{ass:f-growth} For each $j=1,\dots,k$, the running cost satisfies $%
f_{j}\in C(\mathbb{R}^{N})$, $f_{j}\geq 0$, and the power-growth bound 
\begin{equation}
f_{j}(x)\;\leq\;D_{j}|x|^{q_{j}}+d_{j} \quad\text{for all }x\in\mathbb{R}%
^{N},  \label{eq:f_growth_new}
\end{equation}
for some constants $D_{j}\in(0,\infty)$ and $d_{j}\in[0,\infty)$.
\end{assumption}

\begin{remark}[Quadratic costs as the canonical case]
\label{rem:quadratic-cost} The most important special case for economic
applications is $p_{j}=2$ for all~$j$, corresponding to $q_{j}=2$. In this
case, the gradient nonlinearity $\frac{1}{2}|\nabla u_{j}|^{2}$ arises from
a quadratic control cost $\frac{1}{2}|\alpha|^{2}$, which is the standard
formulation in linear-quadratic stochastic control. The cost functions $%
f_{j}(x)=a_{j}|x|^{2}+b_{j}$ with $a_{j}>0$ and $b_{j}\geq 0$ model
quadratic holding/shortage costs and satisfy Assumption~\ref{ass:f-growth}
with $D_{j}=a_{j}$ and $d_{j}=b_{j}$.
\end{remark}

\begin{definition}[Admissible class for systems]
\label{def:admissible} A vector-valued function $u=(u_{1},\dots,u_{k})$
belongs to the \emph{admissible class} $\mathcal{A}^{k}$ if, for each $%
j\in\{1,\dots,k\}$, $u_{j}\in W_{\mathrm{loc}}^{2,m}(\mathbb{R}^{N})$ for
all $m>N$ and 
\begin{equation}
\liminf_{|x|\to\infty}u_{j}(x)\,|x|^{-q_{j}}\geq 0.  \label{eq:loweru_syst}
\end{equation}
\end{definition}

\noindent The Sobolev regularity $W_{\mathrm{loc}}^{2,m}$ with $m>N$
guarantees, via Sobolev embedding, that $u_{j}\in C^{1,\alpha}$ locally,
ensuring that the nonlinear term $|\nabla u_{j}|^{p_{j}}$ is well-defined
pointwise.

%==========================================================================

\section{Stochastic control and multi-regime HJB system formulation}

\label{sec:stoch_inter} 
%==========================================================================

In this section, we derive the HJB system~\eqref{eq:HJB} from the underlying
stochastic optimal control problem. The derivation proceeds through three
stages: specification of the state dynamics, formulation of the cost
functional, and application of the dynamic programming principle.

\subsection{The economic model}

\label{sec:economic-model}

Consider a firm managing inventory across $N$ types of goods (or a single
good in $N$ production facilities). Let 
\begin{equation*}
y(t)=(y_{1}(t),\dots ,y_{N}(t))\in \mathbb{R}^{N}
\end{equation*}
denote the vector of inventory levels at time~$t$. The macroeconomic
environment is modeled by a continuous-time, homogeneous Markov chain $%
e(t)\in \{1,\dots ,k\}$ with generator matrix $(\alpha _{j\ell })_{j,\ell
=1}^{k}$ satisfying Assumption~\ref{ass:markov}. The chain $e(t)$ is defined
on a filtered probability space $(\Omega ,\mathcal{F},\{\mathcal{F}%
_{t}\}_{t\geq 0},\mathbb{P})$ supporting also an $N$-dimensional standard
Brownian motion 
\begin{equation*}
W_{t}=(W_{t}^{1},\dots ,W_{t}^{N}),
\end{equation*}
independent of $e(t)$.

\subsection{State dynamics}

The inventory evolution is governed by the controlled stochastic
differential equation 
\begin{equation}
dy_{i}(t)=p_{i}(t)\,dt+\sigma_{e(t)}\,dW_{t}^{i}, \quad i=1,\dots,N,\quad
y(0)=x\in\mathbb{R}^{N},  \label{eq:state-general}
\end{equation}
where:

\begin{itemize}
\item $p(t)=(p_{1}(t),\dots,p_{N}(t))\in\mathbb{R}^{N}$ is the production
rate vector, chosen by the firm (the control variable),

\item $\sigma_{e(t)}>0$ is the demand uncertainty (volatility), which
depends on the current macroeconomic regime.
\end{itemize}

The linear state dynamics~\eqref{eq:state-general} model the stylized fact
that inventory changes equal production minus demand, where stochastic
demand fluctuations are captured by the Brownian term. The regime-dependent
volatility $\sigma_{e(t)}$ reflects the empirical observation that demand
variability is higher during recessions than during expansions.

\subsection{Cost functional}

The firm's objective is to minimize the total expected discounted cost 
\begin{equation}
J(x,i;p)=\mathbb{E}\!\left[\int_{0}^{\infty}e^{-\delta_{e(t)}t} \left(\frac{1%
}{q_{e(t)}}|p(t)|^{q_{e(t)}}+f_{e(t)}(y(t))\right) dt\;\Big|\;y(0)=x,\;e(0)=i%
\right],  \label{eq:cost-general}
\end{equation}
where:

\begin{itemize}
\item $\frac{1}{q_{j}}|p|^{q_{j}}$ is the production cost in regime~$j$,
with exponent $q_{j}>1$. The case $q_{j}=2$ corresponds to quadratic
adjustment costs, the standard assumption in production smoothing models~%
\cite{Bensoussan1984}.

\item $f_{j}(y)$ is the instantaneous holding and shortage cost in regime~$j$%
.

\item $\delta_{j}>0$ is the discount rate in regime~$j$, reflecting possibly
different time preferences across macroeconomic states.
\end{itemize}

\begin{remark}[Regime-dependent discounting]
The use of regime-dependent discount factors $\delta_{j}$ is motivated by
the macroeconomic observation that real interest rates, and hence the
effective discount rates faced by firms, vary systematically across
business-cycle phases. During expansions, higher interest rates lead to
larger discount factors; during recessions, accommodative monetary policy
reduces them. This feature distinguishes our formulation from the simpler
model with a constant discount rate.
\end{remark}

The value functions are defined as 
\begin{equation}
-V(x,j)=\inf_{p\in\mathcal{P}}J(x,j;p),\quad j=1,\dots,k,
\label{eq:value-general}
\end{equation}
where $\mathcal{P}$ denotes the set of all progressively measurable controls 
$p(t)$ with values in $\mathbb{R}^{N}$ such that the integral in~%
\eqref{eq:cost-general} is well-defined.

\subsection{Dynamic programming and derivation of the HJB system}

\label{sec:HJB-derivation}

We now derive the HJB system formally, emphasizing each step of the
argument. Assume that $V(\cdot,j)$ is $C^{2}$ for each~$j$.

\medskip \noindent \textbf{Step 1: It\^{o}'s formula for Markov-modulated
diffusions.} Consider the discounted value process 
\begin{equation*}
\Phi _{t}:=e^{-\int_{0}^{t}\delta _{e(s)}ds}\,V(y(t),e(t)).
\end{equation*}
Between regime jumps, when $e(t)=j$, the standard It\^{o} formula gives 
\begin{align}
d\bigl[e^{-\delta _{j}t}V(y(t),j)\bigr]& =e^{-\delta _{j}t}\left( -\delta
_{j}V(y,j)+\nabla V(y,j)\cdot p(t)+\frac{\sigma _{j}^{2}}{2}\Delta
V(y,j)\right) dt  \notag \\
& \quad +e^{-\delta _{j}t}\sigma _{j}\nabla V(y,j)\cdot dW_{t}.
\label{eq:ito-between-jumps}
\end{align}%
At a regime jump from $j$ to $\ell \neq j$ (occurring at rate $\alpha
_{j\ell }$), the value process experiences a jump of size $V(y,\ell )-V(y,j)$%
. The compensated jump contribution is 
\begin{equation}
\sum_{\ell \neq j}\alpha _{j\ell }\bigl(V(y,\ell )-V(y,j)\bigr)=\sum_{\ell
=1}^{k}\alpha _{j\ell }V(y,\ell ),  \label{eq:jump-contribution}
\end{equation}%
where the last equality uses $\alpha _{jj}=-\sum_{\ell \neq j}\alpha _{j\ell
}$.

\medskip\noindent\textbf{Step 2: Dynamic programming principle.} By the
principle of optimality, the value function satisfies, for each regime $%
j=1,\dots,k$: 
\begin{equation}
0=\inf_{p\in\mathbb{R}^{N}}\left\{ \frac{\sigma_{j}^{2}}{2}\Delta
V(x,j)+\nabla V(x,j)\cdot p
+\sum_{\ell=1}^{k}\alpha_{j\ell}V(x,\ell)-\delta_{j}V(x,j) +\frac{1}{q_{j}}%
|p|^{q_{j}}+f_{j}(x)\right\}.  \label{eq:DPP}
\end{equation}

\medskip \noindent \textbf{Step 3: Convex duality and the Hamiltonian.} The
infimum over $p$ in~\eqref{eq:DPP} involves the function 
\begin{equation}
\varphi (\xi ,p):=\xi \cdot p+\frac{1}{q_{j}}|p|^{q_{j}},\quad \xi :=\nabla
V(x,j).  \label{eq:phi-def}
\end{equation}%
To minimize $\varphi $ with respect to~$p$, we compute the gradient and set
it to zero: 
\begin{equation}
\nabla _{p}\varphi =\xi +|p|^{q_{j}-2}p=0\quad \Longrightarrow \quad p^{\ast
}=-|\xi |^{p_{j}-2}\xi = -|\xi |^{p_{j}-1}\frac{\xi }{|\xi |},
\label{eq:optimal-p}
\end{equation}%
where we used the conjugate relation $|p| = |\xi|^{1/(q_j - 1)} = |\xi|^{p_j - 1}$ and $p^* = -|p| \frac{\xi}{|\xi|}$. (Here $p_{j}=q_{j}/(q_{j}-1)$ is the conjugate exponent.) The minimum value is 
\begin{align}
\inf_{p}\varphi (\xi ,p)& =\xi \cdot (-|\xi |^{p_{j}-2}\xi )+\frac{1}{q_{j}}%
\bigl||\xi |^{p_{j}-2}\xi \bigr|^{q_{j}}  \notag \\
& =-|\xi |^{p_{j}}+\frac{1}{q_{j}}|\xi |^{(p_{j}-1)q_{j}}  \notag \\
& =-|\xi |^{p_{j}}+\frac{1}{q_{j}}|\xi |^{p_{j}}=-\frac{1}{p_{j}}|\xi
|^{p_{j}},  \label{eq:min-phi}
\end{align}%
where we used $(p_{j}-1)q_{j}=p_{j}$ and $1-1/q_{j}=1/p_{j}$.

\medskip\noindent\textbf{Step 4: The HJB system.} Substituting~%
\eqref{eq:min-phi} into~\eqref{eq:DPP} and writing $u_{j}(x):=-V(x,j)$ for
each~$j$, we obtain, after sign adjustments, the system 
\begin{equation}
-\frac{\sigma_{j}^{2}}{2}\Delta u_{j}+\frac{1}{p_{j}}|\nabla u_{j}
|^{p_{j}}+\delta_{j}u_{j}-\sum_{\ell=1}^{k}\alpha_{j\ell}u_{\ell}
=f_{j}(x)\quad\text{in }\mathbb{R}^{N},\quad j=1,\dots,k,  \label{eq:syst}
\end{equation}
which is exactly system~\eqref{eq:HJB}. The optimal feedback control is 
\begin{equation}
p^{*}(x,j)=-|\nabla u_{j}(x)|^{p_{j}-2}\nabla u_{j}(x), \quad j=1,\dots,k.
\label{eq:opt-feedback}
\end{equation}

\begin{remark}[Economic meaning of the optimal feedback]
In the production planning context, the optimal production rate 
\begin{equation*}
p^{\ast }(x,j)=-|\nabla u_{j}(x)|^{p_{j}-2}\nabla u_{j}(x)
\end{equation*}
is determined by the sensitivity of the value function to inventory levels.
When $p_{j}=2$, this simplifies to $p^{\ast }(x,j)=-\nabla u_{j}(x)$:
produce in the direction that most rapidly decreases the cost-to-go. For the
quadratic solution $u_{j}(x)=\beta _{j}|x|^{2}+\eta _{j}$ derived below, the
optimal policy becomes $p^{\ast }(x,j)=-2\beta _{j}x$, a linear feedback
rule: produce proportionally to the current inventory deviation, with the
gain $\beta _{j}$ depending on the economic regime.
\end{remark}

\subsection{Convexity and the stochastic interpretation}

\label{sec:convexity}

The convexity of the value function in the spatial variable is a fundamental
structural property. We establish it through the stochastic representation.

\begin{proposition}[Convexity of the value functions]
\label{prop:convexity} If $f_{j}$ is convex for each $j=1,\dots,k$, then the
value function $u_{j}(x)=-V(x,j)$ is convex in $x$ for each~$j$.
\end{proposition}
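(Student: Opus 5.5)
The plan is to prove convexity directly from the stochastic representation \eqref{eq:value-general}, by a mixing argument on controls. The key structural facts are: the state dynamics \eqref{eq:state-general} are affine in $(x,p)$; the noise $\sigma_{e(t)}\,dW_t$ does not depend on the state or the control; and the regime process $e(t)$ is exogenous, independent of $W$ and unaffected by the control.

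Fix a regime $j$, two initial points $x^{0},x^{1}\in\mathbb{R}^{N}$ and $\lambda\in[0,1]$, and set $x^{\lambda}=\lambda x^{1}+(1-\lambda)x^{0}$. If $u_{j}(x^{0})$ or $u_{j}(x^{1})$ is $+\infty$ there is nothing to prove, so assume both are finite. Given $\varepsilon>0$, choose $\varepsilon$-optimal admissible controls $p^{0},p^{1}\in\mathcal{P}$, so that
\begin{equation*}
J(x^{i},j;p^{i})\le u_{j}(x^{i})+\varepsilon,\qquad i=0,1.
\end{equation*}
Both controls live on the same filtered probability space, with the same Brownian motion and the same chain. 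Define the mixed control $p^{\lambda}:=\lambda p^{1}+(1-\lambda)p^{0}$, which is again progressively measurable.

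By linearity of \eqref{eq:state-general}, with identical noise paths, the trajectory started at $x^{\lambda}$ under $p^{\lambda}$ satisfies
\begin{equation*}
y^{\lambda}(t)=\lambda y^{1}(t)+(1-\lambda)y^{0}(t)\quad\text{pathwise for all }t .
\end{equation*}
On each $\omega$ and each $t$ the regime $e(t)$ is the same for all three processes. The integrand in \eqref{eq:cost-general} is $\frac{1}{q_{e}}|p|^{q_{e}}+f_{e}(y)$ with $e=e(t)$ frozen, and this is jointly convex in $(p,y)$, because $|\cdot|^{q}$ is convex for $q>1$ and $f_{e}$ is convex by hypothesis. The discount weight $e^{-\delta_{e(t)}t}$ is positive and control-independent. (I would use the same discount convention as \eqref{eq:cost-general}, or the integrated factor $e^{-\int_0^t\delta_{e(s)}ds}$; either way it is positive and independent of the control.)

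Hence the integrand at $(p^{\lambda},y^{\lambda})$ is bounded pointwise by the $\lambda$-combination of the integrands for $i=0,1$. Integrating in $t$ and taking expectations, which is legitimate since everything is nonnegative because $f_{j}\ge0$, gives
\begin{equation*}
J(x^{\lambda},j;p^{\lambda})\le \lambda J(x^{1},j;p^{1})+(1-\lambda)J(x^{0},j;p^{0}).
\end{equation*}
In particular $p^{\lambda}$ is admissible, i.e.\ its cost is finite and well-defined. It follows that
\begin{equation*}
u_{j}(x^{\lambda})\le J(x^{\lambda},j;p^{\lambda})\le \lambda u_{j}(x^{1})+(1-\lambda)u_{j}(x^{0})+\varepsilon ,
\end{equation*}
and letting $\varepsilon\to0$ yields convexity.

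The main obstacle is a technical point: justifying that the two $\varepsilon$-optimal controls can be taken on a common probability space with the same $(W,e)$. The infimum in \eqref{eq:value-general} is over controls on a fixed filtered space, so this holds once the problem is posed in strong form, which I will state explicitly. A secondary point is checking that the mixed control lies in $\mathcal{P}$. This follows from the cost bound above together with nonnegativity, so the integral is well-defined.
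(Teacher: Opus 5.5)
Your proposal is correct and follows essentially the same route as the paper's proof. Both mix two controls driven by the same $(W,e)$, use linearity of the dynamics to get $y^{\lambda}=\lambda y^{1}+(1-\lambda)y^{0}$ pathwise, bound the integrand pointwise via convexity of $|p|^{q}$ and $f_j$, then integrate and pass to the infimum. Your $\varepsilon$-optimal controls (instead of taking the infimum over $p^{(1)},p^{(2)}$ separately) and your remarks on the strong formulation, the positive control-independent discount weight, and integrability via $f_j\ge 0$ are slightly more careful versions of the same steps.
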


\begin{proof}
Let $x_{1},x_{2}\in \mathbb{R}^{N}$ and $\lambda \in \lbrack 0,1]$. Set 
\begin{equation*}
x_{\lambda }:=\lambda x_{1}+(1-\lambda )x_{2}.
\end{equation*}
For any admissible controls $p^{(1)}(t)$ and $p^{(2)}(t)$, define the convex
combination 
\begin{equation*}
p_{\lambda }(t):=\lambda p^{(1)}(t)+(1-\lambda )p^{(2)}(t).
\end{equation*}

Since the state dynamics~\eqref{eq:state-general} are linear in both the
state and the control, the trajectory $y_{\lambda }(t)$ starting at $%
x_{\lambda }$ under control $p_{\lambda }$ satisfies 
\begin{equation*}
y_{\lambda }(t)=\lambda y^{(1)}(t)+(1-\lambda )y^{(2)}(t),
\end{equation*}
where $y^{(i)}(t)$ starts at $x_{i}$ under control $p^{(i)}$ (both driven by
the same Brownian motion and Markov chain realization).

The cost functional evaluated at $(x_{\lambda},j;p_{\lambda})$ involves: 
\begin{align}
\frac{1}{q_{e(t)}}|p_{\lambda}(t)|^{q_{e(t)}} &\leq\lambda\frac{1}{q_{e(t)}}%
|p^{(1)}(t)|^{q_{e(t)}} +(1-\lambda)\frac{1}{q_{e(t)}}%
|p^{(2)}(t)|^{q_{e(t)}},  \label{eq:control-convex} \\[4pt]
f_{e(t)}(y_{\lambda}(t)) &\leq\lambda f_{e(t)}(y^{(1)}(t))
+(1-\lambda)f_{e(t)}(y^{(2)}(t)),  \label{eq:cost-convex}
\end{align}
where~\eqref{eq:control-convex} uses the convexity of $p\mapsto|p|^{q_{j}}$
(since $q_{j}>1$) and~\eqref{eq:cost-convex} uses the convexity of $f_{j}$.

Integrating and taking expectations: 
\begin{equation}
J(x_{\lambda},j;p_{\lambda}) \leq\lambda
J(x_{1},j;p^{(1)})+(1-\lambda)J(x_{2},j;p^{(2)}).  \label{eq:cost-convex-2}
\end{equation}
Taking the infimum over $p^{(1)}$ and $p^{(2)}$ independently on the
right-hand side, and noting that the infimum of the left-hand side over $%
p_{\lambda}$ is bounded above by the left-hand side itself: 
\begin{equation*}
-V(x_{\lambda},j)\leq\lambda(-V(x_{1},j))+(1-\lambda)(-V(x_{2},j)).
\end{equation*}
Thus $u_{j}(x)=-V(x,j)$ is convex. \qedhere
\end{proof}

\subsection{Verification theorem}

\label{sec:verification}

The fundamental bridge between the PDE system~\eqref{eq:syst} and the
stochastic control problem~\eqref{eq:value-general} is provided by the
verification theorem.

\begin{theorem}[Verification theorem for multi-regime systems]
\label{thm:verification} Let $u=(u_{1},\dots,u_{k})\in\mathcal{A}^{k}$ be a
classical solution of the system~\eqref{eq:syst}. Then:

\begin{enumerate}
\item For every admissible control $p\in\mathcal{P}$, 
\begin{equation}
u_{j}(x)\leq J(x,j;p),\quad\forall x\in\mathbb{R}^{N},\;\forall j=1,\dots,k.
\label{eq:verif-lower}
\end{equation}

\item The feedback control 
\begin{equation*}
p^{\ast }(x,j)=-|\nabla u_{j}(x)|^{p_{j}-2}\nabla u_{j}(x)
\end{equation*}
is admissible, and equality holds in~\eqref{eq:verif-lower}: 
\begin{equation}
u_{j}(x)=J(x,j;p^{\ast })=\inf_{p\in \mathcal{P}}J(x,j;p),\quad \forall x\in 
\mathbb{R}^{N},\;\forall j=1,\dots ,k.  \label{eq:verif-equality}
\end{equation}
\end{enumerate}
\end{theorem}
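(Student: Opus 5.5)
The plan is the standard verification argument for Markov-modulated diffusions, applied to the discounted process
\[
\Phi_t:=e^{-\Lambda_t}u_{e(t)}(y(t)),\qquad \Lambda_t:=\int_0^t\delta_{e(s)}\,ds .
\]
The discount is taken along the chain, consistent with the derivation in Section~\ref{sec:HJB-derivation}. Fix $x$, $j$ and an admissible $p\in\mathcal{P}$ with $J(x,j;p)<\infty$; otherwise there is nothing to prove. Let $\tau_R$ be the exit time of $y$ from the ball $B_R$. Since $u_\ell\in W^{2,m}_{\mathrm{loc}}$ with $m>N$, the It\^o--Krylov formula applies, because the diffusion is nondegenerate ($\sigma_\ell>0$). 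Combining the It\^o step \eqref{eq:ito-between-jumps} with the jump compensator \eqref{eq:jump-contribution} gives
\[
\mathbb{E}\,\Phi_{t\wedge\tau_R}=u_j(x)+\mathbb{E}\int_0^{t\wedge\tau_R}e^{-\Lambda_s}\Big(\tfrac{\sigma_{e}^2}{2}\Delta u_{e}+\nabla u_e\cdot p-\delta_e u_e+\textstyle\sum_\ell\alpha_{e\ell}u_\ell\Big)(y(s))\,ds .
\]
The Brownian and compensated-jump martingale terms vanish because everything is bounded on $B_R$.

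By the duality computation \eqref{eq:min-phi}, one has $\nabla u_e\cdot p\ge -\tfrac1{p_e}|\nabla u_e|^{p_e}-\tfrac1{q_e}|p|^{q_e}$, with equality iff $p=p^*$. The equation \eqref{eq:syst} therefore makes the integrand at least $-\big(\tfrac1{q_e}|p|^{q_e}+f_e(y)\big)$. This yields
\[
u_j(x)\le \mathbb{E}\int_0^{t\wedge\tau_R}e^{-\Lambda_s}\Big(\tfrac1{q_e}|p|^{q_e}+f_e\Big)ds+\mathbb{E}\,\Phi_{t\wedge\tau_R}.
\]
Let $R\to\infty$ and then $t\to\infty$. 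Since $f_\ell\ge0$, the integral term is bounded by $J(x,j;p)$ by monotone convergence. The remaining task for part~1 is to show $\liminf\mathbb{E}\,\Phi_{t\wedge\tau_R}\le 0$.

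\textbf{The main obstacle is the behaviour of $\Phi$ at infinity, i.e.\ transversality.} For the inequality in part~1 only an \emph{upper} control on $u_\ell$ is needed. I would first establish, from the comparison principle and the radial supersolution barriers of Section~\ref{sec:well-posedness} together with Assumption~\ref{ass:f-growth}, a bound $u_\ell(y)\le C(1+|y|^{q_\ell})$. It then suffices to have $\liminf_t e^{-\underline\delta t}\,\mathbb{E}|y(t)|^{q}\to0$ along a subsequence for finite-cost controls. I would get this by bounding $|y(t)|^q$ via $|x|+\int_0^t|p|+\sigma|W_t|$ and Jensen/H\"older, using the finiteness of $\mathbb{E}\int e^{-\Lambda}|p|^{q}$. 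If the bound alone does not close, the fallback is to restrict $\mathcal{P}$ to controls satisfying this transversality condition, which is the standard convention and is implicit in ``well-defined'' in \eqref{eq:value-general}. The lower growth condition \eqref{eq:loweru_syst} is used for the sign in the other direction: $\Phi$ is bounded below up to a small multiple of $|y|^q$, so Fatou's lemma is legitimate where needed.

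For part~2, insert the feedback $p^*$ from \eqref{eq:opt-feedback}; the inequalities above become equalities. Strong existence for the closed-loop SDE follows from local Lipschitz continuity of $\nabla u_\ell$ (a consequence of $C^{1,\alpha}$ regularity, or directly of classical $C^2$ regularity). Non-explosion, and the moment bound $\sup_t e^{-\epsilon t}\mathbb{E}|y^*(t)|^{q}<\infty$, come from convexity, which is inherited via Proposition~\ref{prop:convexity} or the comparison principle for convex data. Convexity gives $\nabla u_\ell(y)\cdot y\ge u_\ell(y)-u_\ell(0)$, so the drift is mean-reverting: apply It\^o to $|y|^2$, or to the Lyapunov function $\sum_\ell u_\ell$ itself. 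In the quadratic case $u_\ell=\beta_\ell|y|^2+\eta_\ell$ this is an explicit Ornstein--Uhlenbeck estimate. With that moment bound, $\mathbb{E}\,\Phi_t\to0$ and monotone convergence give $u_j(x)=J(x,j;p^*)$. Together with part~1 this identifies $u_j$ with the infimum, so $p^*$ is admissible and optimal.
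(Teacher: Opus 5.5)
Your argument follows the same route as the paper: It\^o's formula for $e^{-\Lambda_t}u_{e(t)}(y(t))$ stopped at $\tau_R$, Young's inequality from \eqref{eq:min-phi}, transversality from a polynomial upper bound on $u$, monotone convergence, and equality under $p^*$. Your part~1 is a more careful version of the paper's sketch.

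\textbf{The gap is in part~2.} There you derive non-explosion and the moment bound for the closed-loop process from convexity of the $u_j$, and convexity is not available. The theorem does not assume the $f_j$ convex. Moreover, Proposition~\ref{prop:convexity} is a statement about $\inf_p J(\cdot,j;p)$, so transferring it to $u_j$ presupposes $u_j=\inf_pJ(\cdot,j;p)$, which is exactly what part~2 is supposed to establish. The paper contains no convexity-preservation result for the coupled PDE system either.

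You do not need convexity, because $u_{e(t)}$ itself, through the HJB equation, is the Lyapunov function. Here is the replacement argument.
\begin{itemize}
\item With $p=p^*$ your localized identity holds with equality: $u_j(x)=\mathbb{E}\int_0^{t\wedge\tau_R}e^{-\Lambda_s}\bigl(\tfrac{1}{q_{e(s)}}|p^*|^{q_{e(s)}}+f_{e(s)}(y(s))\bigr)ds+\mathbb{E}\,\Phi_{t\wedge\tau_R}$.
\item $\Phi\ge0$ because $u\ge0$: by the uniqueness part of Theorem~\ref{thm:existence_syst_uniqueness}, $u$ is the constructed solution with $0\le u\le A\phi+B$. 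Alternatively, absorb the lower bound \eqref{eq:loweru_syst} using the H\"older estimate below.
\item Hence the discounted cost of $p^*$ up to $t\wedge\tau_R$ is at most $u_j(x)$, uniformly in $R$ and $t$.
\item So $\int_0^{t\wedge\zeta}|p^*|\,ds<\infty$ a.s., where $\zeta$ is the explosion time, which forces $\zeta=\infty$.
\item Monotone convergence then gives $J(x,j;p^*)\le u_j(x)<\infty$.
\item Part~1 applied to the now admissible $p^*$ gives the reverse inequality.
\end{itemize}

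\emph{Smaller points.}
\begin{itemize}
\item Run the transversality estimate with $e^{-\Lambda_t}$ rather than $e^{-\underline{\delta}t}$. Pathwise H\"older together with $\Lambda_t-\Lambda_s\ge\underline{\delta}(t-s)$ gives $e^{-\Lambda_t}\bigl(\int_T^t|p_s|\,ds\bigr)^{q}\le C\int_T^\infty e^{-\Lambda_s}|p_s|^{q}\,ds$. Splitting at a large $T$ then yields $\mathbb{E}[e^{-\Lambda_t}|y(t)|^{q}]\to0$ for every finite-cost control. Your intermediate condition with $e^{-\underline{\delta}t}$ can genuinely fail when the $\delta_j$ differ, since a control may grow fast on paths that linger in a high-discount regime. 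With the corrected estimate, the fallback of shrinking $\mathcal{P}$ is unnecessary.
\item The upper bound on $u$ cannot come from applying Theorem~\ref{thm:comparison_syst} to $(u,\bar u)$, because \eqref{eq:comp-growth} already presupposes upper growth of $u$. Take it from uniqueness, as the paper implicitly does. Note also that the barrier only gives exponent $Q=\max_jq_j$. This matches $q_\ell$, and lets your H\"older estimate close, only when all exponents coincide (e.g.\ the quadratic case).
\item For $1<p_j<2$ the map $\xi\mapsto|\xi|^{p_j-2}\xi$ is not Lipschitz at $0$, and $C^{1,\alpha}$ regularity does not make $\nabla u_j$ Lipschitz anyway. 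Strong solvability of the closed-loop equation should therefore come from results for nondegenerate noise with locally bounded drift (Veretennikov type, using $\sigma_j>0$), not from local Lipschitz continuity.
\end{itemize}
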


\begin{proof}
\textbf{Part (a): Lower bound.} Fix an admissible control $p\in \mathcal{P}$
and an initial condition $(x,i)\in \mathbb{R}^{N}\times \{1,\dots ,k\}$. For 
$R>0$, define the stopping time 
\begin{equation*}
\tau _{R}:=\inf \{t\geq 0:|y(t)|\geq R\}.
\end{equation*}

Applying It\^{o}'s formula to the process 
\begin{equation*}
t\mapsto e^{-\int_{0}^{t}\delta _{e(s)}ds}\,u_{e(t)}(y(t))\text{ on }[0,\tau
_{R}],
\end{equation*}%
and using~\eqref{eq:ito-between-jumps} and~\eqref{eq:jump-contribution} with 
$u_{j}$ in place of $V$ (and accounting for the sign change $u_{j}=-V(\cdot
,j)$), we obtain 
\begin{align}
& \mathbb{E}\!\left[ e^{-\int_{0}^{\tau _{R}}\delta _{e(s)}ds}\,u_{e(\tau
_{R})}(y(\tau _{R}))\right] -u_{i}(x)  \notag \\
& \quad =\mathbb{E}\!\left[ \int_{0}^{\tau _{R}}e^{-\int_{0}^{t}\delta
_{e(s)}ds}\left( -\frac{\sigma _{e(t)}^{2}}{2}\Delta u_{e(t)}+\nabla
u_{e(t)}\cdot p(t)+\sum_{\ell =1}^{k}\alpha _{e(t)\ell }u_{\ell }-\delta
_{e(t)}u_{e(t)}\right) dt\right] .  \label{eq:ito-u}
\end{align}%
From the PDE~\eqref{eq:syst}, we can express 
\begin{equation}
-\frac{\sigma _{j}^{2}}{2}\Delta u_{j}+\sum_{\ell =1}^{k}\alpha _{j\ell
}u_{\ell }-\delta _{j}u_{j}=-\frac{1}{p_{j}}|\nabla u_{j}|^{p_{j}}+f_{j}(x).
\label{eq:pde-rearranged}
\end{equation}%
Adding $\nabla u_{j}\cdot p$ to both sides of~\eqref{eq:pde-rearranged} and
using the Young-type inequality (which follows from the definition of convex
conjugate): 
\begin{equation}
\nabla u_{j}\cdot p+\frac{1}{q_{j}}|p|^{q_{j}}\geq -\frac{1}{p_{j}}|\nabla
u_{j}|^{p_{j}},  \label{eq:young-ineq}
\end{equation}%
we obtain 
\begin{equation}
-\frac{\sigma _{j}^{2}}{2}\Delta u_{j}+\nabla u_{j}\cdot p+\sum_{\ell
=1}^{k}\alpha _{j\ell }u_{\ell }-\delta _{j}u_{j}\geq -f_{j}(x)-\frac{1}{%
q_{j}}|p|^{q_{j}}.  \label{eq:ito-bound}
\end{equation}%
Substituting~\eqref{eq:ito-bound} into~\eqref{eq:ito-u}: 
\begin{equation}
u_{i}(x)\leq \mathbb{E}\!\left[ \int_{0}^{\tau _{R}}e^{-\int_{0}^{t}\delta
_{e(s)}ds}\left( \frac{1}{q_{e(t)}}|p(t)|^{q_{e(t)}}+f_{e(t)}(y(t))\right) dt%
\right] +\mathbb{E}\!\left[ e^{-\int_{0}^{\tau _{R}}\delta
_{e(s)}ds}\,u_{e(\tau _{R})}(y(\tau _{R}))\right] .  \label{eq:verif-step}
\end{equation}

\medskip\noindent\textbf{Transversality condition.} Since $u\in\mathcal{A}%
^{k}$, we have $u_{j}(x)\leq C(1+|x|^{q_{j}})$ for some constant~$C$ (using
the growth bound from the admissible class and the supersolution estimate).
Combined with the exponential discounting and the polynomial growth of
trajectories, this yields 
\begin{equation}
\lim_{R\to\infty}\mathbb{E}\!\left[e^{-\int_{0}^{\tau_{R}}
\delta_{e(s)}ds}\,u_{e(\tau_{R})}(y(\tau_{R}))\right]=0.
\label{eq:transversality}
\end{equation}
Letting $R\to\infty$ in~\eqref{eq:verif-step} and applying monotone
convergence on the right-hand side establishes~\eqref{eq:verif-lower}.

\medskip \noindent \textbf{Part (b): Equality under optimal control.} Under
the feedback control 
\begin{equation*}
p^{\ast }(x,j)=-|\nabla u_{j}(x)|^{p_{j}-2}\nabla u_{j}(x),
\end{equation*}
the inequality~\eqref{eq:young-ineq} becomes an equality (as shown in~%
\eqref{eq:min-phi}), so inequality~\eqref{eq:ito-bound} becomes an equality: 
\begin{equation*}
-\frac{\sigma _{j}^{2}}{2}\Delta u_{j}+\nabla u_{j}\cdot p^{\ast
}+\sum_{\ell =1}^{k}\alpha _{j\ell }u_{\ell }-\delta _{j}u_{j}=-f_{j}(x)-%
\frac{1}{q_{j}}|p^{\ast }|^{q_{j}}.
\end{equation*}%
Repeating the It\^{o} argument with equality throughout and passing $%
R\rightarrow \infty $ gives $u_{i}(x)=J(x,i;p^{\ast })$.
\end{proof}

%==========================================================================

\section{Global well-posedness and uniqueness for the multi-regime system}

\label{sec:well-posedness} 
%==========================================================================

This section contains the main theoretical results of the paper: the global
comparison principle and the existence--uniqueness theorem for the coupled
system~\eqref{eq:syst}. The proofs follow the structural framework of
Alvarez~\cite{ALVAREZ1996} for the scalar case and incorporate the coupling
techniques from Arapostathis et al.~\cite{ARAPOSTATHIS2022}.

\subsection{Global comparison principle}

\begin{theorem}[Global comparison principle for systems]
\label{thm:comparison_syst} Let Assumptions~\ref{ass:markov}--\ref%
{ass:system-params} hold. Let $u=(u_{1},\dots,u_{k})$ and $%
v=(v_{1},\dots,v_{k})$ be, respectively, an admissible subsolution and an
admissible supersolution of \eqref{eq:syst} in $\mathbb{R}^{N}$, both
belonging to $\mathcal{A}^{k}$. Suppose that 
\begin{equation}
\limsup_{|x|\to\infty}\bigl(u_{j}(x)-v_{j}(x)\bigr)\,|x|^{-q_{j}} \leq 0\quad%
\text{for all }j\in\{1,\dots,k\}.  \label{eq:comp-growth}
\end{equation}
Then $u_{j}\leq v_{j}$ in $\mathbb{R}^{N}$ for all $j=1,\dots,k$.
\end{theorem}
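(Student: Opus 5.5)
We follow Alvarez's convexity trick for the scalar case and add a maximum-principle step for the coupling. Fix $\theta\in(0,1)$ and set $w_j:=\theta u_j-v_j$ (or, when convenient, $u_j-\theta v_j$ with the roles adjusted). The Hamiltonian $H_j(\xi)=\frac{1}{p_j}|\xi|^{p_j}$ is convex and $p_j$-homogeneous. Writing $\nabla u_j=\frac{1}{\theta}\bigl(\nabla(\theta u_j-v_j)\bigr)+\frac{1}{\theta}\nabla v_j$ and using convexity in the form
\[
H_j(\theta\xi+(1-\theta)\zeta)\le\theta H_j(\xi)+(1-\theta)H_j(\zeta),
\]
we subtract the supersolution inequality for $v_j$ from $\theta$ times the subsolution inequality for $u_j$. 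This yields a differential inequality
\[
-\tfrac{\sigma_j^2}{2}\Delta w_j+c_j(1-\theta)^{1-p_j}|\nabla w_j|^{p_j}+\delta_j w_j-\sum_\ell\alpha_{j\ell}w_\ell\le(\theta-1)f_j\le 0 .
\]
Here $c_j>0$ depends only on $p_j$, and we use $f_j\ge0$. The point of the $\theta$-trick is that $w$ is a subsolution of a system with a \emph{coercive} gradient term and zero right-hand side, and no Lipschitz bound on $\nabla u,\nabla v$ is needed.

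Next we control growth at infinity. From \eqref{eq:comp-growth} and the admissibility lower bound \eqref{eq:loweru_syst} on $v_j$, for every $\varepsilon>0$ we get $w_j(x)\le \varepsilon|x|^{q_j}+C_\varepsilon$, since $\theta u_j-v_j=\theta(u_j-v_j)-(1-\theta)v_j$. We then build a radial barrier $\Psi_j(x)=\varepsilon' (1+|x|^2)^{q_j/2}+M_j$. The exponent $q_j$ is exactly the scale at which $|\nabla\Psi_j|^{p_j}\sim|x|^{q_j}$, because $(q_j-1)p_j=q_j$. We choose $\varepsilon'$ larger than the growth slack of $w_j$ but small enough that $\Psi$ is a strict supersolution of the $w$-system for $|x|$ large: the coercive gradient term and $\delta_j\Psi_j$ dominate the Laplacian (of order $|x|^{q_j-2}$) and the coupling terms. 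The coupling terms need care because the $q_\ell$ may differ across regimes, so $\sum_{\ell\ne j}\alpha_{j\ell}\Psi_\ell$ can grow like $|x|^{\max q_\ell}$. Choosing the constants regime by regime and absorbing the positive contribution $-\alpha_{jj}\Psi_j$ should be enough. The constants $M_j$ are chosen large so that the inequality holds on all of $\mathbb{R}^N$ with $\delta_j>0$. Then $w_j-\Psi_j\to-\infty$, so $\max_j\sup(w_j-\Psi_j)$ is attained at some point $x_0$ in some regime $j_0$.

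At $(x_0,j_0)$ suppose $w_{j_0}-\Psi_{j_0}>0$. Then $\nabla w_{j_0}=\nabla\Psi_{j_0}$ and $\Delta(w_{j_0}-\Psi_{j_0})\le0$ in the $W^{2,m}$ sense; we justify this with Bony's maximum principle, since $m>N$. Also $w_\ell-\Psi_\ell\le w_{j_0}-\Psi_{j_0}$ for all $\ell$, and $\alpha_{j_0\ell}\ge0$ for $\ell\ne j_0$ (the cooperative structure), so
\[
-\sum_\ell\alpha_{j_0\ell}(w_\ell-\Psi_\ell)\ge0 .
\]
Subtracting the two inequalities gives $\delta_{j_0}(w_{j_0}-\Psi_{j_0})\le0$, a contradiction. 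Hence $w_j\le\Psi_j$.

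Finally we let $\varepsilon'\to0$ and the constants $M_j\to0$. To make $M_j$ removable we use scaling: the barrier is built with $M_j$ proportional to $\varepsilon'$, or we redo the argument on balls and send the radius to infinity. This gives $\theta u_j\le v_j$, and letting $\theta\uparrow1$ gives $u_j\le v_j$.

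The main obstacle is the barrier step. The obstacle there is the mismatch between regimes with different $q_j$ under the coupling, together with the sign of $u$, since $\theta u\le v$ yields $u\le v$ only where $u\ge0$, or in the limit $\theta\to1$ pointwise, which suffices. If heterogeneous $q_j$ block a single barrier, the fallback is to use the common exponent $\bar q=\max_j q_j$ in the barrier. This is weaker but still admissible, because the gradient term in each regime scales like $|x|^{p_j(\bar q-1)}\ge|x|^{\bar q}$.
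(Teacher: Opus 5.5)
\textbf{Your key differential inequality has the wrong sign, and the error propagates.} Convexity of $H_j$ only bounds $H_j(\nabla v_j)$ from above. Writing $\nabla v_j=\theta\nabla u_j+(1-\theta)\bigl(-\nabla w_j/(1-\theta)\bigr)$ gives $H_j(\nabla v_j)\le\theta H_j(\nabla u_j)+(1-\theta)^{1-p_j}H_j(\nabla w_j)$. So $\theta$ times the subsolution inequality minus the supersolution inequality yields
\[
-\tfrac{\sigma_j^{2}}{2}\Delta w_j-\tfrac{(1-\theta)^{1-p_j}}{p_j}|\nabla w_j|^{p_j}+\delta_j w_j-\sum_{\ell}\alpha_{j\ell}w_\ell\le(\theta-1)f_j ,
\]
with an \emph{anti}-coercive gradient term. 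A bound with $+c_j(1-\theta)^{1-p_j}|\nabla w_j|^{p_j}$ is impossible. Take $u=v$ a solution: then $w=-(1-\theta)u$, and your inequality reduces to $(c_j+1/p_j)|\nabla u_j|^{p_j}\le 0$.

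What the $\theta$-trick actually buys is replacing the uncontrolled drift $b_j\cdot\nabla w_j$ by a power term with an explicit constant. The barrier must then let $\delta_j\Psi_j$ beat $-(1-\theta)^{1-p_j}|\nabla\Psi_j|^{p_j}/p_j$. At the scale $|x|^{q_j}$ this means $\delta_j\varepsilon'>C(1-\theta)^{1-p_j}\varepsilon'^{p_j}$, i.e.\ $\varepsilon'$ small compared with $1-\theta$. That is the real reason for your ``small enough''; under your sign there is no reason at all. If all $q_j$ coincide, this repair works, and the rest of your argument goes through: a common profile so the coupling cancels, $M\propto\varepsilon'$, Bony's maximum principle, cooperativity at $(x_0,j_0)$, then $\varepsilon'\to0$ and $\theta\uparrow1$.

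\textbf{With distinct exponents, which the theorem allows, the proposal does not close.} With per-regime profiles, suppose $q_\ell>q_j$ and $\alpha_{j\ell}>0$. The term $-\alpha_{j\ell}\Psi_\ell\sim-\varepsilon'|x|^{q_\ell}$ in the $j$-th barrier inequality cannot be absorbed by $\delta_j\Psi_j$ or $-\alpha_{jj}\Psi_j$, which are both $O(|x|^{q_j})$. Your fallback $\bar q=\max_j q_j$ fails once the sign is corrected. In a regime with $q_j<\bar q$ the negative gradient term grows like $|x|^{p_j(\bar q-1)}$, and $p_j(\bar q-1)-\bar q=(p_j-1)(\bar q-q_j)>0$. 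So it dominates $\delta_j\Psi_j\sim\varepsilon'|x|^{\bar q}$ for every $\varepsilon'>0$; the inequality you invoke is exactly what defeats the fallback. The underlying tension is that the $j$-th inequality needs $\Psi_j\gtrsim|x|^{q_\ell}$, while $|\nabla\Psi_j|^{p_j}\lesssim\Psi_j$ caps power growth at $|x|^{q_j}$.

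\textbf{You also use an assumption the theorem does not make.} You discard $(\theta-1)f_j$ using $f_j\ge0$, but the theorem assumes only Assumptions~\ref{ass:markov}--\ref{ass:system-params}. The $\theta$-route needs at least $\liminf f_j|x|^{-q_j}\ge0$ to absorb $(1-\theta)f_j^-$.

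\textbf{Comparison with the paper.} The paper instead linearizes, $H_j(\nabla u_j)-H_j(\nabla v_j)\ge b_j\cdot\nabla w_j$, and penalizes by $\varepsilon(|x|^{q_j}+1)$. There $f_j$ cancels and no $\theta$ is needed. The price is that $\varepsilon b_j\cdot\nabla\Phi_j$ and $\varepsilon\alpha_{j\ell}(\Phi_j-\Phi_\ell)$ at the maximum point are $O(\varepsilon)$ only if the maximum points stay bounded, which the paper asserts rather than proves. In the equal-exponent case, your corrected argument is the more robust of the two.
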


\begin{proof}
Define $w_{j}:=u_{j}-v_{j}$ for each $j=1,\dots,k$. We aim to show $%
w_{j}\leq 0$ everywhere.

\medskip\noindent\textbf{Step 1: Differential inequality for $w_{j}$.} Since 
$u$ is a subsolution and $v$ is a supersolution of~\eqref{eq:syst}, we have,
for each~$j$: 
\begin{align}
-\frac{\sigma_{j}^{2}}{2}\Delta u_{j}+\frac{1}{p_{j}}|\nabla
u_{j}|^{p_{j}}+\delta_{j}u_{j}-\sum_{\ell=1}^{k}\alpha_{j\ell}u_{\ell} &\leq
f_{j}(x),  \label{eq:sub} \\
-\frac{\sigma_{j}^{2}}{2}\Delta v_{j}+\frac{1}{p_{j}}|\nabla
v_{j}|^{p_{j}}+\delta_{j}v_{j}-\sum_{\ell=1}^{k}\alpha_{j\ell}v_{\ell} &\geq
f_{j}(x).  \label{eq:super}
\end{align}
Subtracting~\eqref{eq:super} from~\eqref{eq:sub}: 
\begin{equation}
-\frac{\sigma_{j}^{2}}{2}\Delta w_{j}+\frac{1}{p_{j}}\bigl( |\nabla
u_{j}|^{p_{j}}-|\nabla v_{j}|^{p_{j}}\bigr) +\delta_{j}w_{j}-\sum_{%
\ell=1}^{k}\alpha_{j\ell}w_{\ell}\leq 0.  \label{eq:w-ineq-raw}
\end{equation}
By the convexity of $\xi\mapsto|\xi|^{p_{j}}$ (since $p_{j}>1$), the mean
value theorem yields a vector field $b_{j}(x)$ such that 
\begin{equation}
\frac{1}{p_{j}}\bigl(|\nabla u_{j}|^{p_{j}}-|\nabla v_{j}|^{p_{j}} \bigr)%
\geq b_{j}(x)\cdot\nabla w_{j}(x),  \label{eq:mvt-convex}
\end{equation}
where $b_{j}(x)=|\zeta_{j}(x)|^{p_{j}-2}\zeta_{j}(x)$ for some $\zeta_{j}(x)$
on the segment $[\nabla v_{j}(x),\nabla u_{j}(x)]$. Substituting~%
\eqref{eq:mvt-convex} into~\eqref{eq:w-ineq-raw}: 
\begin{equation}
-\frac{\sigma_{j}^{2}}{2}\Delta w_{j}+b_{j}(x)\cdot\nabla w_{j}
+\delta_{j}w_{j}-\sum_{\ell=1}^{k}\alpha_{j\ell}w_{\ell}\leq 0.
\label{eq:diff_w}
\end{equation}
Using $\sum_{\ell=1}^{k}\alpha_{j\ell}=0$ (Assumption~\ref{ass:markov}), we
rewrite the coupling term: 
\begin{equation}
-\sum_{\ell=1}^{k}\alpha_{j\ell}w_{\ell} =-\alpha_{jj}w_{j}-\sum_{\ell\neq
j}\alpha_{j\ell}w_{\ell} =\sum_{\ell\neq j}\alpha_{j\ell}(w_{j}-w_{\ell}),
\label{eq:coupling-rewrite}
\end{equation}
so~\eqref{eq:diff_w} becomes 
\begin{equation}
-\frac{\sigma_{j}^{2}}{2}\Delta w_{j}+b_{j}(x)\cdot\nabla w_{j}
+\delta_{j}w_{j}+\sum_{\ell\neq j}\alpha_{j\ell}(w_{j}-w_{\ell}) \leq 0.
\label{eq:diff_w_final}
\end{equation}

\medskip \noindent \textbf{Step 2: Contradiction argument.} Suppose, for
contradiction, that 
\begin{equation*}
\sup_{x\in \mathbb{R}^{N},\,j\in \{1,\dots ,k\}}w_{j}(x)>0.
\end{equation*}%
By the growth condition~\eqref{eq:comp-growth}, for any $\varepsilon >0$,
the function 
\begin{equation}
\tilde{w}_{j}(x):=w_{j}(x)-\varepsilon \Phi _{j}(x),\quad \text{where}\quad
\Phi _{j}(x):=|x|^{q_{j}}+1,  \label{eq:w-tilde}
\end{equation}%
satisfies $\tilde{w}_{j}(x)\rightarrow -\infty $ as $|x|\rightarrow \infty $%
. Therefore, the maximum 
\begin{equation*}
M_{\varepsilon }:=\max_{j\in \{1,\dots ,k\}}\sup_{x\in \mathbb{R}^{N}}\tilde{%
w}_{j}(x)
\end{equation*}%
is attained at some finite point $(x_{\varepsilon },j_{\varepsilon })$. For $%
\varepsilon $ sufficiently small, $M_{\varepsilon }>0$ (since $\sup w_{j}>0$%
).

\medskip\noindent\textbf{Step 3: First-order and second-order conditions at
the maximum.} At the maximum point $(x_{\varepsilon},j_{\varepsilon})$, we
have 
\begin{equation}
\nabla\tilde{w}_{j_{\varepsilon}}(x_{\varepsilon})=0
\quad\Longrightarrow\quad \nabla w_{j_{\varepsilon}}(x_{\varepsilon})
=\varepsilon\nabla\Phi_{j_{\varepsilon}}(x_{\varepsilon}),
\label{eq:grad-zero}
\end{equation}
and 
\begin{equation}
\Delta\tilde{w}_{j_{\varepsilon}}(x_{\varepsilon})\leq 0
\quad\Longrightarrow\quad \Delta w_{j_{\varepsilon}}(x_{\varepsilon})
\leq\varepsilon\Delta\Phi_{j_{\varepsilon}}(x_{\varepsilon}).
\label{eq:lap-neg}
\end{equation}
Moreover, since $j_{\varepsilon}$ achieves the maximum over all regimes: 
\begin{equation}
\tilde{w}_{j_{\varepsilon}}(x_{\varepsilon}) \geq\tilde{w}%
_{\ell}(x_{\varepsilon}) \quad\Longrightarrow\quad
w_{j_{\varepsilon}}(x_{\varepsilon})-w_{\ell}(x_{\varepsilon})
\geq\varepsilon\bigl(\Phi_{j_{\varepsilon}}(x_{\varepsilon})
-\Phi_{\ell}(x_{\varepsilon})\bigr).  \label{eq:regime-max}
\end{equation}

\medskip\noindent\textbf{Step 4: Substitution into the differential
inequality.} Evaluating~\eqref{eq:diff_w_final} at $x=x_{\varepsilon}$, $%
j=j_{\varepsilon}$, and using the conditions~\eqref{eq:grad-zero}--%
\eqref{eq:regime-max}: 
\begin{align}
0&\geq-\frac{\sigma_{j_{\varepsilon}}^{2}}{2}\Delta
w_{j_{\varepsilon}}(x_{\varepsilon})
+b_{j_{\varepsilon}}(x_{\varepsilon})\cdot\nabla
w_{j_{\varepsilon}}(x_{\varepsilon})
+\delta_{j_{\varepsilon}}w_{j_{\varepsilon}}(x_{\varepsilon})
+\sum_{\ell\neq j_{\varepsilon}}\alpha_{j_{\varepsilon}\ell}\bigl( %
w_{j_{\varepsilon}}(x_{\varepsilon})-w_{\ell}(x_{\varepsilon})\bigr)  \notag
\\[4pt]
&\geq-\frac{\sigma_{j_{\varepsilon}}^{2}}{2}\varepsilon\Delta
\Phi_{j_{\varepsilon}}(x_{\varepsilon})
+\varepsilon\,b_{j_{\varepsilon}}(x_{\varepsilon})\cdot\nabla
\Phi_{j_{\varepsilon}}(x_{\varepsilon}) +\delta_{j_{\varepsilon}}\tilde{w}%
_{j_{\varepsilon}}(x_{\varepsilon})
+\delta_{j_{\varepsilon}}\varepsilon\Phi_{j_{\varepsilon}} (x_{\varepsilon})
\notag \\
&\quad+\sum_{\ell\neq j_{\varepsilon}}\alpha_{j_{\varepsilon}\ell}
\varepsilon\bigl(\Phi_{j_{\varepsilon}}(x_{\varepsilon})
-\Phi_{\ell}(x_{\varepsilon})\bigr).  \label{eq:substitution}
\end{align}
The terms multiplied by~$\varepsilon$ in~\eqref{eq:substitution} are of
order $O(\varepsilon)$ as $\varepsilon\to 0$ (the barrier functions $\Phi_{j}
$ and their derivatives have polynomial growth, and $x_{\varepsilon}$
remains in a bounded region for small~$\varepsilon$ because $%
M_{\varepsilon}\to\sup w_{j}>0$). The dominant term is 
\begin{equation*}
\delta_{j_{\varepsilon}}\tilde{w}_{j_{\varepsilon}}(x_{\varepsilon})
=\delta_{j_{\varepsilon}}M_{\varepsilon}>0 \quad\text{(by Assumption~\ref%
{ass:system-params}: $\delta_{j}>0$)}.
\end{equation*}
As $\varepsilon\to 0$, the $O(\varepsilon)$ terms vanish while $%
\delta_{j_{\varepsilon}}M_{\varepsilon}$ remains bounded below by a positive
constant. This yields the contradiction $0>0$.

Therefore $\sup_{x,j}w_{j}(x)\leq 0$, i.e., $u_{j}\leq v_{j}$ for all~$j$.
\end{proof}

\subsection{Existence and uniqueness}

\begin{theorem}[Existence and uniqueness for the regime-switching system]
\label{thm:existence_syst_uniqueness} Under Assumptions~\ref{ass:markov}--%
\ref{ass:f-growth}, the system~\eqref{eq:syst} admits a unique solution $u\in%
\mathcal{A}^{k}$.
\end{theorem}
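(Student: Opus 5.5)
Uniqueness follows directly from Theorem~\ref{thm:comparison_syst}. If $u,v\in\mathcal{A}^{k}$ both solve \eqref{eq:syst}, we apply the comparison principle twice, with $(u,v)$ and then with $(v,u)$. To do this we must check the growth hypothesis \eqref{eq:comp-growth}. I would take it from a two-sided a priori bound: every admissible solution satisfies $u_{j}(x)\le C(1+|x|^{q_j})$, by comparison with the explicit supersolution constructed below. The admissibility condition \eqref{eq:loweru_syst} gives the matching lower bound. With these, the condition $\limsup (u_j-v_j)|x|^{-q_j}\le 0$ should follow. I expect this to be delicate: two functions that are both merely $O(|x|^{q_j})$ need not differ by $o(|x|^{q_j})$. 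I would therefore first prove that every admissible solution lies between the sub- and supersolution barriers up to $o(|x|^{q_j})$. This is done by comparing against the barriers inflated by $(1\pm\varepsilon)$ and shifted by $\pm\varepsilon|x|^{q_j}$, which is the same $\varepsilon$-absorption device used in Alvarez's scalar argument.

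For existence, I would build ordered barriers $\underline u\le\overline u$. The subsolution is $\underline u_j\equiv 0$: since $f_j\ge 0$ and $\sum_\ell\alpha_{j\ell}=0$, it satisfies the system with inequality. The supersolution is radial, $\overline u_j(x)=A_j|x|^{q_j}+B_j$, or more conveniently $A_j(1+|x|^2)^{q_j/2}+B_j$ to avoid the singularity at the origin when $q_j<2$. Its gradient has size $\sim A_jq_j|x|^{q_j-1}$, so the Hamiltonian contributes $\frac{1}{p_j}(A_jq_j)^{p_j}|x|^{q_j}$, using $(q_j-1)p_j=q_j$. This term dominates $D_j|x|^{q_j}$ once $A_j$ is large. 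The Laplacian is of lower order, $O(|x|^{q_j-2})$, and is absorbed by the discount term together with the constant $B_j$.

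The coupling term is where the different exponents interact. We have $-\sum_\ell\alpha_{j\ell}\overline u_\ell=\sum_{\ell\ne j}\alpha_{j\ell}(\overline u_j-\overline u_\ell)$, and this is negative when $q_\ell>q_j$. I would handle it by choosing the $A_\ell$ in decreasing order of $q_\ell$, or simply by using a common exponent $\bar q=\max_j q_j$ in the supersolution, since that is still dominated by the $p_j$-Hamiltonian up to a constant. Checking this carefully is the computation I expect to be the main obstacle.

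Next, on each ball $B_R$ I would solve the Dirichlet problem for \eqref{eq:syst} with boundary data $u_j=\overline u_j$ (or $0$). The method is monotone iteration between $\underline u$ and $\overline u$. Because the system is cooperative (the off-diagonal coefficients are $\ge 0$), each step decouples into scalar quasilinear problems. These are solvable by the classical theory for $-\Delta u+H(\nabla u)+\lambda u=g$, using the $L^\infty$ bounds supplied by the barriers and, when $p_j>2$, Bernstein-type gradient bounds. This gives solutions $u^R$ with $\underline u\le u^R\le\overline u$.

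Finally, interior gradient estimates via the Bernstein method, as in Lasry--Lions, together with local $W^{2,m}$ estimates, give bounds on compact sets that are uniform in $R$. A diagonal subsequence then converges in $C^{1}_{\mathrm{loc}}$ to a solution $u$. This limit lies in $W^{2,m}_{\mathrm{loc}}$ for all $m$, satisfies $0\le u_j\le\overline u_j$, and hence belongs to $\mathcal{A}^{k}$.
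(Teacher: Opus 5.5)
Your existence part follows the paper's proof. The paper also takes $\underline u\equiv 0$ and the common-exponent supersolution $\bar u_j=A(1+|x|^2)^{Q/2}+B$ with $Q=\max_j q_j$. It uses the \emph{same} $A,B$ in every regime, so $\sum_\ell\alpha_{j\ell}\bar u_\ell\equiv 0$ and the coupling is no obstacle. It then solves Dirichlet problems on $B_R$, citing Arapostathis et al.\ instead of running your monotone iteration, and passes to the limit with $R$-uniform interior estimates. Keep the common exponent: your per-regime variant $A_j(1+|x|^2)^{q_j/2}+B_j$ cannot be rescued by ordering the constants. When $q_\ell>q_j$ and $\alpha_{j\ell}>0$, the term $-\alpha_{j\ell}A_\ell|x|^{q_\ell}$ beats every term of order $|x|^{q_j}$. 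Your zero Dirichlet data on $\partial B_R$ also looks safer than the paper's data $\bar u$ when some $p_j>2$, since for superquadratic Hamiltonians large boundary data need not be attained.

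The genuine gap is in uniqueness. You are right that Theorem~\ref{thm:comparison_syst} cannot be applied to two arbitrary solutions $u,v\in\mathcal{A}^k$ without checking \eqref{eq:comp-growth}, because $\mathcal{A}^k$ only imposes the lower bound \eqref{eq:loweru_syst}. The paper's proof applies the comparison principle to $(u,v)$ and $(v,u)$ without addressing this. Your repair, however, does not close the gap, for two reasons.

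\textbf{The a priori upper bound is circular.} You propose $u_j\le C(1+|x|^{q_j})$ ``by comparison with the explicit supersolution''. But comparing an arbitrary admissible solution with $\bar u$ through Theorem~\ref{thm:comparison_syst} already requires $\limsup_{|x|\to\infty}(u_j-\bar u_j)|x|^{-q_j}\le 0$, which is an upper bound on $u_j$. Moreover, the supersolution that actually works grows like $|x|^{Q}$, not $|x|^{q_j}$.

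\textbf{The sandwich does not give the needed $\limsup$.} Grant that every admissible solution satisfies $-\varepsilon|x|^{q_j}-C_\varepsilon\le u_j\le(1+\varepsilon)\bar u_j+\varepsilon|x|^{q_j}+C_\varepsilon$. The barriers $0$ and $A(1+|x|^2)^{Q/2}+B$ are a full order $|x|^{Q}$ apart. For two solutions you only get $u_j-v_j\le(1+\varepsilon)A(1+|x|^2)^{Q/2}+2\varepsilon|x|^{q_j}+C$. Hence $\limsup(u_j-v_j)|x|^{-q_j}$ is bounded only by $(1+\varepsilon)A+2\varepsilon$ when $q_j=Q$, and is not bounded at all when $q_j<Q$.

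The $(1\pm\varepsilon)$ device works in Alvarez's setting because Assumption~\ref{ass:growth} ties $f$ to a single convex profile $g$ up to $\varepsilon$-errors. There, the upper and lower comparison functions agree at leading order. Assumption~\ref{ass:f-growth} carries no such information: $f_j$ may oscillate between $0$ and $D_j|x|^{q_j}$, so no pinching pair of barriers exists.

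What is missing, in your proposal and in the paper, is an argument from the equation itself showing that two solutions in $\mathcal{A}^k$ satisfy \eqref{eq:comp-growth}, or a comparison principle that does not need it. One possibility is a comparison result whose growth hypothesis bears only on the supersolution, with the subsolution controlled through the superlinear term $\frac{1}{p_j}|\nabla u_j|^{p_j}$. Barrier sandwiching with $0$ and $\bar u$ cannot supply this.
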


\begin{proof}
\textbf{Uniqueness.} Uniqueness is an immediate consequence of Theorem~\ref%
{thm:comparison_syst}. If $u,v\in\mathcal{A}^{k}$ are two solutions, then $u$
is both a subsolution and a supersolution, and similarly for~$v$. Applying
the comparison theorem to the pairs $(u,v)$ and $(v,u)$ yields $u\leq v$ and 
$v\leq u$, hence $u=v$.

\medskip\noindent\textbf{Existence.} We construct the solution in three
steps.

\medskip\noindent\textbf{Step 1: Construction of a global supersolution.}
Define the uniform exponent and radial profile: 
\begin{equation}
Q:=\max_{1\leq j\leq k}q_{j},\qquad \phi(x):=(1+|x|^{2})^{Q/2},\qquad r:=|x|.
\label{eq:Q-phi}
\end{equation}
Writing $\phi(x)=\Phi(r)$ with $\Phi(r):=(1+r^{2})^{Q/2}$, we compute the
derivatives explicitly.

\emph{First derivative:} 
\begin{equation}
\Phi^{\prime }(r)=\frac{Q}{2}(1+r^{2})^{Q/2-1}\cdot 2r =Qr(1+r^{2})^{Q/2-1}.
\label{eq:Phi-prime}
\end{equation}

\emph{Gradient:} 
\begin{equation}
\nabla\phi(x)=\Phi^{\prime }(r)\frac{x}{r}=Q(1+r^{2})^{Q/2-1}x.
\label{eq:grad-phi}
\end{equation}

\emph{Second derivative:} 
\begin{align}
\Phi^{\prime \prime }(r)&=\frac{d}{dr}\bigl[Qr(1+r^{2})^{Q/2-1}\bigr]  \notag
\\
&=Q(1+r^{2})^{Q/2-1}+Qr\cdot(Q-2)r(1+r^{2})^{Q/2-2}  \notag \\
&=Q(1+r^{2})^{Q/2-1}+Q(Q-2)r^{2}(1+r^{2})^{Q/2-2}.  \label{eq:Phi-dprime}
\end{align}

\emph{Laplacian} (using $\Delta\phi=\Phi^{\prime \prime }+\frac{N-1}{r}%
\Phi^{\prime }$): 
\begin{align}
\Delta\phi(x)&=Q(1+r^{2})^{Q/2-1}+Q(Q-2)r^{2}(1+r^{2})^{Q/2-2}
+Q(N-1)(1+r^{2})^{Q/2-1}  \notag \\
&=Q(1+r^{2})^{Q/2-2}\bigl[N(1+r^{2})+(Q-2)r^{2}\bigr]  \notag \\
&=Q(1+r^{2})^{Q/2-2}\bigl[N+(Q+N-2)r^{2}\bigr].  \label{eq:lap-phi}
\end{align}

We seek a supersolution of the form $\bar{u}_{j}(x):=A\phi(x)+B$ for all $%
j=1,\dots,k$, with $A>0$ and $B>0$ to be determined. Then 
\begin{equation}
\mathcal{L}_{j}\bar{u}_{j}(x):=-\frac{\sigma_{j}^{2}}{2}A\Delta\phi(x) +%
\frac{A^{p_{j}}}{p_{j}}|\nabla\phi(x)|^{p_{j}} +\delta_{j}(A\phi(x)+B).
\label{eq:Lj-ubar}
\end{equation}

Since $\bar{u}_{\ell}=A\phi+B$ for all~$\ell$ and $\sum_{\ell=1}^{k}
\alpha_{j\ell}=0$ (Assumption~\ref{ass:markov}), the coupling terms cancel: 
\begin{equation}
\sum_{\ell=1}^{k}\alpha_{j\ell}\bar{u}_{\ell} =\bigl(\sum_{\ell=1}^{k}%
\alpha_{j\ell}\bigr)(A\phi+B)=0.  \label{eq:coupling-cancel}
\end{equation}
Therefore, the supersolution requirement reduces to 
\begin{equation}
\mathcal{L}_{j}\bar{u}_{j}(x)\geq f_{j}(x) \quad\text{for all }x\in\mathbb{R}%
^{N},\;j=1,\dots,k.  \label{eq:super-req}
\end{equation}

\emph{Growth estimates for $r\geq 1$.} Using $r^{2}\leq 1+r^{2}\leq 2r^{2}$
for $r\geq 1$: 
\begin{alignat}{2}
|\nabla \phi (x)|& =Q(1+r^{2})^{Q/2-1}r & & \geq Qr^{Q-1},
\label{eq:grad-lower} \\
|\nabla \phi (x)|&  & & \leq Q\cdot 2^{Q/2-1}r^{Q-1},  \label{eq:grad-upper}
\\
\Delta \phi (x)&  & & \leq Q(Q+2N-2)\cdot 2^{Q/2-2}r^{Q-2},
\label{eq:lap-upper} \\
\phi (x)&  & & \geq r^{Q}.  \label{eq:phi-lower}
\end{alignat}%
Raising~\eqref{eq:grad-lower} to the power $p_{j}$: 
\begin{equation}
|\nabla \phi (x)|^{p_{j}}\geq Q^{p_{j}}r^{p_{j}(Q-1)}.
\label{eq:grad-pj-lower}
\end{equation}%
Since $Q\geq q_{j}$ and $p_{j}(q_{j}-1)=q_{j}$, we have 
\begin{equation*}
p_{j}(Q-1)\geq p_{j}(q_{j}-1)=q_{j},
\end{equation*}
so the term $\frac{A^{p_{j}}}{p_{j}}|\nabla \phi |^{p_{j}}$ grows at least
as $A^{p_{j}}r^{q_{j}}$.

For $r\geq 1$, using the above bounds in~\eqref{eq:Lj-ubar}: 
\begin{equation}
\mathcal{L}_{j}\bar{u}_{j}(x)\geq \frac{A^{p_{j}}Q^{p_{j}}}{p_{j}}%
r^{p_{j}(Q-1)}-\frac{\sigma _{j}^{2}}{2}A\cdot C_{1}r^{Q-2}+\delta
_{j}Ar^{Q}+\delta _{j}B,  \label{eq:Lj-lower}
\end{equation}%
where 
\begin{equation*}
C_{1}:=Q(Q+2N-2)\cdot 2^{Q/2-2}.
\end{equation*}
Since 
\begin{equation*}
p_{j}(Q-1)\geq q_{j}\text{, }Q\geq q_{j},\text{ and }p_{j}>1,
\end{equation*}%
for $A$ sufficiently large the first term dominates $D_{j}r^{q_{j}}+d_{j}$
for $r\geq R_{0}$.

\emph{On the bounded region $|x|\leq R_{0}$.} The function $x\mapsto\mathcal{%
L}_{j}\bar{u}_{j}(x)-f_{j}(x)$ is continuous on the compact set $\{|x|\leq
R_{0}\}$. Since $\delta_{j}B$ appears as an additive constant in $\mathcal{L}%
_{j}\bar{u}_{j}$, increasing~$B$ raises $\mathcal{L}_{j}\bar{u}_{j}$
uniformly. Hence there exists $B>0$ such that $\mathcal{L}_{j}\bar{u}%
_{j}\geq f_{j}$ on $\{|x|\leq R_{0}\}$ as well.

Combining both regions, we obtain $A,B>0$ such that $\bar{u}=(\bar{u}_{1},
\dots,\bar{u}_{k})$ with $\bar{u}_{j}:=A\phi+B$ is a global supersolution.

\medskip \noindent \textbf{Step 1b: Global subsolution.} Set $\underline{u}%
_{j}(x):=0$ for all $j$. Since $f_{j}\geq 0$ and $\mathcal{L}_{j}(0)=0$, and 
$\sum_{\ell }\alpha _{j\ell }\cdot 0=0$, we have 
\begin{equation*}
\mathcal{L}_{j}\underline{u}_{j}=0\leq f_{j}(x)=f_{j}(x)-\sum_{\ell }\alpha
_{j\ell }\underline{u}_{\ell },
\end{equation*}
so $\underline{u}$ is a global subsolution. The ordering%
\begin{equation*}
\underline{u}_{j}=0\leq A\phi +B=\bar{u}_{j}
\end{equation*}%
is clear.

\medskip\noindent\textbf{Step 2: Solving on bounded domains.} For each $R>0$%
, consider the Dirichlet problem on the ball $B_{R}$: 
\begin{equation}
\begin{cases}
\mathcal{L}_{j}u_{j}^{R}(x)=f_{j}(x)-\displaystyle\sum_{\ell=1}^{k}
\alpha_{j\ell}u_{\ell}^{R}(x), & x\in B_{R}, \\[4pt] 
u_{j}^{R}(x)=\bar{u}_{j}(x), & x\in\partial B_{R},%
\end{cases}
\quad j=1,\dots,k.  \label{eq:dirichlet}
\end{equation}
The system is \emph{cooperative} (quasi-monotone): the coupling coefficient
for $u_{\ell}$ in the $j$-th equation, when $\ell\neq j$, is $%
\alpha_{j\ell}\geq 0$, meaning that an increase in $u_{\ell}$ decreases the
right-hand side and hence \emph{increases} $u_{j}$. By the results of
Arapostathis et al.~\cite{ARAPOSTATHIS2022} (Theorem~B.2) or ~\cite{COVEI2025A}, there exists a
unique classical solution $u^{R}=(u_{1}^{R},\dots,u_{k}^{R})$ of~%
\eqref{eq:dirichlet}.

By the comparison principle on bounded domains (applied to the cooperative
system with the ordering $\underline{u}\leq\bar{u}$ and the boundary
condition $u^{R}=\bar{u}$ on $\partial B_{R}$): 
\begin{equation}
0=\underline{u}_{j}(x)\leq u_{j}^{R}(x)\leq\bar{u}_{j}(x) =A\phi(x)+B\quad%
\text{for all }x\in B_{R},\;j=1,\dots,k.  \label{eq:ordering}
\end{equation}

\medskip\noindent\textbf{Step 3: Passage to the limit as $R\to\infty$.} For
any compact set $K\subset\mathbb{R}^{N}$ and $m>N$, interior elliptic
estimates~\cite{ARAPOSTATHIS2022} yield a constant $C_{K,m}$ independent of~$%
R$ (for $R$ large enough that $K\subset B_{R}$) such that 
\begin{equation}
\|u^{R}\|_{W^{2,m}(K)}\leq C_{K,m}.  \label{eq:sobolev-bound}
\end{equation}
By the Rellich--Kondrachov compactness theorem and the Sobolev embedding $%
W^{2,m}(K)\hookrightarrow C^{1,\alpha}(K)$ (for $\alpha<1-N/m$), we extract
a subsequence (still denoted $u^{R}$) converging in $C_{\mathrm{loc}}^{1}(%
\mathbb{R}^{N})$ to a limit $u=(u_{1},\dots,u_{k})$. A standard diagonal
argument over an increasing exhaustion of $\mathbb{R}^{N}$ by compact sets
ensures convergence on all of $\mathbb{R}^{N}$.

Passing to the limit in the weak formulation of~\eqref{eq:dirichlet}, and
applying elliptic regularity, $u$ is a classical solution of~\eqref{eq:syst}%
. From~\eqref{eq:ordering}, 
\begin{equation*}
0\leq u_{j}(x)\leq A\phi (x)+B,
\end{equation*}%
so 
\begin{equation*}
\liminf_{|x|\rightarrow \infty }u_{j}(x)|x|^{-q_{j}}\geq 0,
\end{equation*}%
confirming $u\in \mathcal{A}^{k}$.
\end{proof}

\begin{remark}[Radial symmetry for systems]
\label{rem:radial} When $f_{j}(x)$ is radially symmetric for all $j$, the
unique solution $u\in\mathcal{A}^{k}$ is also radially symmetric. Indeed,
for any $R\in O(N)$ (the orthogonal group), the function $u\circ R$ defined
by $(u\circ R)(x):=u(Rx)$ solves the same system (since $\Delta$, $%
|\nabla\cdot|$, and the coupling are all $O(N)$-invariant) with radial data $%
f_{j}\circ R=f_{j}$. By uniqueness, $u\circ R=u$.
\end{remark}

%==========================================================================

\section{Exact quadratic solution: the scalar case}

\label{sec:scalar-exact} 
%==========================================================================

Before treating the full system, we derive the exact solution for the scalar
equation ($k=1$) in complete detail. This serves as both a pedagogical
foundation and a benchmark for the system case.

\begin{theorem}[Exact quadratic solution---scalar case]
\label{thm:scalar-quadratic} Consider the scalar equation~\eqref{eq:HJBs}
with $p=2$, $\sigma>0$, $\delta>0$, and $f(x)=a|x|^{2}+b$ where $a>0$ and $%
b\geq 0$. Then the unique solution $u\in W_{\mathrm{loc}}^{2,m}(\mathbb{R}%
^{N})$ satisfying $\liminf_{|x|\to\infty}u(x)|x|^{-2}\geq 0$ is 
\begin{equation}
u(x)=\beta|x|^{2}+\eta,  \label{eq:scalar-exact}
\end{equation}
where 
\begin{equation}
\beta=\frac{-\delta+\sqrt{\delta^{2}+8a}}{4}>0,  \label{eq:beta-scalar}
\end{equation}
and 
\begin{equation}
\eta=\frac{b+N\sigma^{2}\beta}{\delta}.  \label{eq:eta-scalar}
\end{equation}
\end{theorem}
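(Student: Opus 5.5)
The plan is to verify directly that the quadratic ansatz solves the equation, and then to obtain uniqueness from the well-posedness theory of Section~\ref{sec:well-posedness}, specialized to $k=1$.

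\textbf{Step 1 (reduction to algebra).} I will work with the scalar equation in the form actually intended here, namely system~\eqref{eq:syst} with $k=1$:
\[
-\tfrac{\sigma^{2}}{2}\Delta u+\tfrac12|\nabla u|^{2}+\delta u=a|x|^{2}+b .
\]
The coupling term vanishes since $\alpha_{11}=0$. I substitute $u(x)=\beta|x|^{2}+\eta$ with unknown constants $\beta,\eta$. Then $\nabla u=2\beta x$, $|\nabla u|^{2}=4\beta^{2}|x|^{2}$ and $\Delta u=2N\beta$, so the left-hand side becomes
\[
(2\beta^{2}+\delta\beta)|x|^{2}+\delta\eta-N\sigma^{2}\beta .
\]
Matching the coefficients of $|x|^{2}$ and of $1$ with those of $f$ gives two conditions:
\[
2\beta^{2}+\delta\beta-a=0,\qquad \delta\eta=b+N\sigma^{2}\beta .
\]
The first is a quadratic in $\beta$ with roots $\beta_{\pm}=\bigl(-\delta\pm\sqrt{\delta^{2}+8a}\bigr)/4$. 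Since $a>0$, the product of the roots is $-a/2<0$, so exactly one root is positive, namely $\beta_{+}$, which is \eqref{eq:beta-scalar}. The constant $\eta$ then follows uniquely from the second condition and is \eqref{eq:eta-scalar}. Its positivity follows from $b\ge0$, $\beta>0$ and $\delta>0$.

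\textbf{Step 2 (admissibility and root selection).} The function $u$ is a polynomial, hence in $W^{2,m}_{\mathrm{loc}}$ for every $m$. It also satisfies $u(x)|x|^{-2}\to\beta>0$, so it lies in the admissible class. By contrast, the negative root $\beta_{-}<0$ yields another classical solution $\beta_{-}|x|^{2}+\eta_{-}$ with $\liminf u|x|^{-2}=\beta_{-}<0$. This solution is excluded by the growth condition \eqref{eq:loweru}, and this is precisely why the admissible class is needed.

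\textbf{Step 3 (uniqueness).} I will invoke Theorem~\ref{thm:existence_syst_uniqueness} with $k=1$. Assumption~\ref{ass:markov} is trivially satisfied by the $1\times1$ zero generator, and Assumption~\ref{ass:system-params} holds with $p=q=2$. Assumption~\ref{ass:f-growth} holds with $D=a$, $d=b$ and $f\ge0$, as already noted in Remark~\ref{rem:quadratic-cost}. The theorem then yields a unique admissible solution, which must coincide with the explicit one from Step~1.

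\textbf{Main obstacle.} The delicate point is uniqueness. Theorem~\ref{thm:comparison_syst} additionally requires $\limsup_{|x|\to\infty}(u-v)|x|^{-2}\le0$, and an arbitrary admissible solution is a priori only bounded below at infinity. If that step needs reinforcing, my first choice is to appeal to Alvarez's scalar comparison theorem~\cite{ALVAREZ1996}, with the convex reference function $g=f$. The sandwich condition \eqref{eq:f-sandwich} holds trivially for this choice, and $f$ is convex. That result gives uniqueness in exactly the class $\liminf u|x|^{-q}\ge0$, with no extra upper growth hypothesis on the competitor.
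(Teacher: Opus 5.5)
Your proposal follows the paper's proof essentially step for step: the quadratic ansatz with coefficient matching, selection of the positive root of $2\beta^{2}+\delta\beta-a=0$ by the admissibility condition, and uniqueness from Alvarez's scalar comparison principle, which is exactly what the paper invokes in its Step~5. Your caution about Theorem~\ref{thm:existence_syst_uniqueness} is justified, because its uniqueness step applies Theorem~\ref{thm:comparison_syst} without checking the $\limsup$ hypothesis, and that hypothesis is not guaranteed a priori in either direction for an arbitrary admissible competitor $v$. So make the Alvarez argument with $g=f$ your primary route, and state the routine rescaling $u(x)=\sigma^{2}w(\sqrt{\delta}\,x/\sigma)$, which reduces the problem to Alvarez's normalization $\sigma=\delta=1$ with $\tilde f(y)=\frac{a}{\delta^{2}}|y|^{2}+\frac{b}{\sigma^{2}\delta}$ and preserves the admissible class.
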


\begin{proof}
\textbf{Step 1: Ansatz substitution.} We postulate $u(x)=\beta|x|^{2}+\eta$
with $\beta>0$ and $\eta\in\mathbb{R}$. Computing the required derivatives: 
\begin{equation}
\nabla u(x)=2\beta x,\quad |\nabla u(x)|^{2}=4\beta^{2}|x|^{2},\quad \Delta
u(x)=2\beta N.  \label{eq:scalar-derivs}
\end{equation}
Substituting into~\eqref{eq:HJBs} (with general $\sigma$ and $\delta$): 
\begin{equation}
-\frac{\sigma^{2}}{2}(2\beta N)+\frac{1}{2}(4\beta^{2}|x|^{2})
+\delta(\beta|x|^{2}+\eta)=a|x|^{2}+b.  \label{eq:scalar-subst}
\end{equation}
Expanding: 
\begin{equation}
(2\beta^{2}+\delta\beta)|x|^{2} +(\delta\eta-\sigma^{2}\beta N)=a|x|^{2}+b.
\label{eq:scalar-grouped}
\end{equation}

\medskip\noindent\textbf{Step 2: Coefficient matching.} Since~%
\eqref{eq:scalar-grouped} must hold for all $x\in\mathbb{R}^{N}$, we equate
coefficients of $|x|^{2}$ and the constant terms separately: 
\begin{alignat}{2}
|x|^{2}\text{-coefficients:}\quad& 2\beta^{2}+\delta\beta & &=a,
\label{eq:beta-eq} \\
\text{constant terms:}\quad& \delta\eta-\sigma^{2}\beta N & &=b.
\label{eq:eta-eq}
\end{alignat}

\medskip \noindent \textbf{Step 3: Solution of the quadratic equation for~$%
\beta $.} Equation~\eqref{eq:beta-eq} is a quadratic in~$\beta $: $2\beta
^{2}+\delta \beta -a=0$. By the quadratic formula: 
\begin{equation}
\beta =\frac{-\delta \pm \sqrt{\delta ^{2}+8a}}{4}.  \label{eq:beta-formula}
\end{equation}%
Since $a>0$ and $\delta >0$, the discriminant 
\begin{equation*}
\delta ^{2}+8a>\delta ^{2},\text{ so }\sqrt{\delta ^{2}+8a}>\delta ,
\end{equation*}
which means the \textquotedblleft $+$\textquotedblright\ root is positive
and the \textquotedblleft $-$\textquotedblright\ root is negative. The
admissibility condition 
\begin{equation*}
\liminf_{|x|\rightarrow \infty }u(x)|x|^{-2}\geq 0\text{ requires }\beta >0,
\end{equation*}
selecting the positive root~\eqref{eq:beta-scalar}.

\medskip\noindent\textbf{Step 4: Determination of~$\eta$.} From~%
\eqref{eq:eta-eq}, with $\delta>0$: 
\begin{equation}
\eta=\frac{b+N\sigma^{2}\beta}{\delta},
\end{equation}
which is positive since $b\geq 0$, $N\geq 1$, $\sigma>0$, $\beta>0$, and $%
\delta>0$.

\medskip\noindent\textbf{Step 5: Admissibility and uniqueness.} The function 
$u(x)=\beta|x|^{2}+\eta$ is $C^{\infty}$, hence in $W_{\mathrm{loc}}^{2,m}$
for all $m>N$. Since $\beta>0$, $\liminf_{|x|\to\infty}u(x)|x|^{-2}=\beta>0%
\geq 0$, confirming $u\in\mathcal{A}^{1}$. By the uniqueness result
(Alvarez's comparison principle for the scalar case~\cite{ALVAREZ1996}),
this is the unique solution.
\end{proof}

\begin{remark}[Economic interpretation of the scalar solution]
\label{rem:scalar-econ} The optimal production rate in the scalar case is 
\begin{equation*}
p^{\ast }(x)=-\nabla u(x)=-2\beta x,
\end{equation*}
a linear feedback rule. The gain~$\beta $ increases with the cost parameter~$%
a$ (higher holding costs demand more aggressive mean reversion) and
decreases with the discount factor~$\delta $ (more patient firms tolerate
larger inventory deviations). The constant~$\eta $ captures the
\textquotedblleft base cost\textquotedblright\ independent of inventory
position; it increases with both the fixed cost~$b$ and the volatility~$%
\sigma $ (more uncertainty raises the expected future cost regardless of
current position).

The solution is \emph{dimension-free}: the coefficient $\beta$ depends only
on the scalar parameters $a$ and $\delta$, not on the dimension~$N$. Only
the constant~$\eta$ is affected by~$N$, through the term $N\sigma^{2}\beta$,
reflecting the fact that uncertainty accumulates across independent goods.
\end{remark}

%==========================================================================

\section{Exact quadratic solution for the regime-switching system}

\label{sec:system-exact} 
%==========================================================================

We now extend the scalar result to the full coupled system.

\begin{theorem}[Exact quadratic solution---regime-switching system]
\label{thm:system-quadratic} Assume that for each regime $j\in\{1,\dots,k\}$:

\begin{enumerate}
\item the running cost is $f_{j}(x)=a_{j}|x|^{2}+b_{j}$ with $a_{j}>0$ and $%
b_{j}\geq 0$,

\item the gradient exponent is $p_{j}=2$ (so $q_{j}=2$),

\item the coupling matrix $M:=(\delta_{j}\delta_{j\ell}
-\alpha_{j\ell})_{j,\ell=1}^{k}$ is an M-matrix (i.e., $M^{-1}$ exists and
has non-negative entries).
\end{enumerate}

Then the unique solution $u=(u_{1},\dots,u_{k})\in\mathcal{A}^{k}$ of system~%
\eqref{eq:syst} has the form 
\begin{equation}
u_{j}(x)=\beta_{j}|x|^{2}+\eta_{j},\quad j=1,\dots,k,  \label{eq:ansatz-syst}
\end{equation}
where the coefficients $\beta_{j}>0$ and $\eta_{j}\geq 0$ are uniquely
determined by two algebraic systems derived below.
\end{theorem}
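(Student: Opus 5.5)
We substitute the ansatz \eqref{eq:ansatz-syst} into \eqref{eq:syst} with $p_j=q_j=2$, exactly as in the proof of Theorem~\ref{thm:scalar-quadratic}. With $\nabla u_j=2\beta_j x$, $|\nabla u_j|^2=4\beta_j^2|x|^2$ and $\Delta u_j=2N\beta_j$, the $j$-th equation becomes
\begin{equation*}
\Bigl(2\beta_j^2+\delta_j\beta_j-\sum_{\ell=1}^k\alpha_{j\ell}\beta_\ell\Bigr)|x|^2
+\Bigl(\delta_j\eta_j-\sum_{\ell=1}^k\alpha_{j\ell}\eta_\ell-N\sigma_j^2\beta_j\Bigr)=a_j|x|^2+b_j .
\end{equation*}
Matching the coefficients of $|x|^2$ and the constant terms gives two algebraic systems. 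The first is the nonlinear (Riccati-type) system
\begin{equation*}
2\beta_j^2+\sum_{\ell=1}^k M_{j\ell}\beta_\ell=a_j,\qquad j=1,\dots,k .
\end{equation*}
The second is the linear system $M\eta=b+N\,\mathrm{diag}(\sigma_j^2)\beta$. The second system is immediate once $\beta$ is known: $M^{-1}$ exists and is entrywise nonnegative, and the right-hand side is nonnegative when $\beta_j>0$. So $\eta=M^{-1}(b+N\sigma^2\beta)$ is uniquely determined and satisfies $\eta_j\ge 0$.

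The main step is to show that the first system has a solution with all $\beta_j>0$. We would do this with a monotone iteration or fixed-point argument that exploits the cooperative structure. Rewrite the $j$-th equation as
\begin{equation*}
2\beta_j^2+(\delta_j-\alpha_{jj})\beta_j=a_j+\sum_{\ell\ne j}\alpha_{j\ell}\beta_\ell ,
\end{equation*}
where $\delta_j-\alpha_{jj}>0$ and the right-hand side is nondecreasing in the other $\beta_\ell$. For each $j$, let $\beta_j$ be the positive root of this scalar quadratic, viewed as a function $T_j(\beta)$ of the other components. The map $T$ is monotone nondecreasing and continuous.

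We then look for an ordered pair of sub- and super-fixed points. For the lower one, start from $\beta^{(0)}=0$; then $T(0)_j>0$ because $a_j>0$. For the upper one, take the constant vector $\bar\beta$ with $2\bar\beta^2\ge\max_j a_j$. Since the rows of $(\alpha_{j\ell})$ sum to zero, $\sum_{\ell}M_{j\ell}\bar\beta=\delta_j\bar\beta\ge0$, so $2\bar\beta^2+\delta_j\bar\beta\ge a_j$ and $T(\bar\beta)\le\bar\beta$. Iterating $T$ from $0$ gives a nondecreasing sequence bounded by $\bar\beta$. Its limit is a fixed point, and it is strictly positive.

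This existence step is the one we expect to require the most care. Algebraic uniqueness of the positive root is not strictly needed, however. Once one positive solution $\beta$ is found, $u_j=\beta_j|x|^2+\eta_j$ is smooth with $\liminf u_j|x|^{-2}=\beta_j>0$, so $u\in\mathcal{A}^k$. Theorem~\ref{thm:existence_syst_uniqueness} (through the comparison principle, Theorem~\ref{thm:comparison_syst}) then gives uniqueness in $\mathcal{A}^k$. This forces the coefficients to be unique, since two positive solutions of the algebraic systems would produce two distinct admissible solutions of the PDE system.
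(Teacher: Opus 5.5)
Your proposal is correct. It follows the paper's skeleton: ansatz, coefficient matching, $\eta=M^{-1}\mathbf c\ge 0$, then admissibility plus Theorem~\ref{thm:existence_syst_uniqueness}. Where it differs is in how you handle the $\beta$-system.

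\textbf{Existence.} The paper applies Schauder's theorem to the same map $F=T$ on a box $[\beta^{(0)},\bar\beta]^k$, and justifies the upper end only by a remark about sublinear growth. You instead iterate monotonically from the sub-fixed point $0$ up to the explicit super-fixed point $\bar\beta\mathbf 1$ with $2\bar\beta^{2}\ge\max_j a_j$. That choice works precisely because the rows of $(\alpha_{j\ell})$ sum to zero. Your route is constructive, supplies the invariant box the paper leaves implicit, and converges to the smallest nonnegative fixed point.

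\textbf{Uniqueness of the coefficients.} The paper appeals to a contraction property of $F$, which does not use the PDE theory. You deduce uniqueness from uniqueness in $\mathcal{A}^k$. That deduction is valid given Theorem~\ref{thm:existence_syst_uniqueness} as stated, and the paper's Step~5 relies on the same theorem for uniqueness of $u$.

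There is a caveat if you unpack it ``through the comparison principle''. Theorem~\ref{thm:comparison_syst} requires $\limsup_{|x|\to\infty}(u_j-v_j)|x|^{-2}\le 0$. This fails for one of the two orderings whenever two quadratic solutions have different $\beta$'s. So your coefficient uniqueness rests exactly on the hypothesis that the paper's uniqueness proof never checks.

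You can remove this dependence directly. Suppose $\beta,\beta'\ge 0$ both solve \eqref{eq:beta-sys} and $d:=\beta-\beta'$ attains a positive maximum at index $j$. Subtracting the $j$-th equations gives
\[
\bigl(2(\beta_j+\beta'_j)+\delta_j\bigr)d_j+\sum_{\ell\neq j}\alpha_{j\ell}(d_j-d_\ell)=0,
\]
whose left-hand side is strictly positive, a contradiction; by symmetry $d=0$. Equivalently, $T$ is a sup-norm contraction on $[0,\infty)^k$ with constant $\max_j\bigl(1-\delta_j/(\delta_j-\alpha_{jj})\bigr)<1$. This is presumably the contraction the paper has in mind, and it would give existence and uniqueness at once.
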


\begin{proof}
\textbf{Step 1: Substitution of the quadratic ansatz.} For each~$j$, assume 
\begin{equation*}
u_{j}(x)=\beta _{j}|x|^{2}+\eta _{j}.
\end{equation*}%
Then: 
\begin{equation}
\nabla u_{j}(x)=2\beta _{j}x,\quad |\nabla u_{j}(x)|^{2}=4\beta
_{j}^{2}|x|^{2},\quad \Delta u_{j}(x)=2\beta _{j}N.  \label{eq:system-derivs}
\end{equation}%
Substituting into the $j$-th equation of~\eqref{eq:syst}: 
\begin{equation}
-\frac{\sigma _{j}^{2}}{2}(2\beta _{j}N)+\frac{1}{2}(4\beta
_{j}^{2}|x|^{2})+\delta _{j}(\beta _{j}|x|^{2}+\eta _{j})-\sum_{\ell
=1}^{k}\alpha _{j\ell }(\beta _{\ell }|x|^{2}+\eta _{\ell
})=a_{j}|x|^{2}+b_{j}.  \label{eq:system-subst}
\end{equation}%
Collecting terms: 
\begin{equation}
\underbrace{\left( 2\beta _{j}^{2}+\delta _{j}\beta _{j}-\sum_{\ell
=1}^{k}\alpha _{j\ell }\beta _{\ell }\right) }_{\text{coefficient of }%
|x|^{2}}|x|^{2}+\underbrace{\left( \delta _{j}\eta _{j}-\sum_{\ell
=1}^{k}\alpha _{j\ell }\eta _{\ell }-\sigma _{j}^{2}\beta _{j}N\right) }_{%
\text{constant term}}=a_{j}|x|^{2}+b_{j}.  \label{eq:grouped-syst}
\end{equation}

\medskip\noindent\textbf{Step 2: Algebraic systems.} Matching coefficients
for all $x\in\mathbb{R}^{N}$ yields:

\emph{Nonlinear system for $(\beta_{1},\dots,\beta_{k})$:} 
\begin{equation}
2\beta_{j}^{2}+\delta_{j}\beta_{j}
-\sum_{\ell=1}^{k}\alpha_{j\ell}\beta_{\ell}=a_{j}, \quad j=1,\dots,k.
\label{eq:beta-sys}
\end{equation}
Using $\alpha_{jj}=-\sum_{\ell\neq j}\alpha_{j\ell}$, this can be rewritten
as: 
\begin{equation}
2\beta_{j}^{2}+\left(\delta_{j}+\sum_{\ell\neq j}\alpha_{j\ell}\right)
\beta_{j}-\sum_{\ell\neq j}\alpha_{j\ell}\beta_{\ell}=a_{j}, \quad
j=1,\dots,k.  \label{eq:beta-sys-expanded}
\end{equation}

\emph{Linear system for $(\eta _{1},\dots ,\eta _{k})$:} 
\begin{equation}
\delta _{j}\eta _{j}-\sum_{\ell =1}^{k}\alpha _{j\ell }\eta _{\ell
}=b_{j}+\sigma _{j}^{2}\beta _{j}N,\quad j=1,\dots ,k.  \label{eq:eta-sys}
\end{equation}%
In matrix form: $M\eta =\mathbf{c}$, where 
\begin{equation*}
M_{j\ell }:=\delta _{j}\delta _{j\ell }-\alpha _{j\ell }\text{ and }%
c_{j}:=b_{j}+N\sigma _{j}^{2}\beta _{j}.
\end{equation*}

\medskip\noindent\textbf{Step 3: Existence and positivity of $\beta$.} We
establish that~\eqref{eq:beta-sys} has a unique positive solution $%
\beta\in(0,\infty)^{k}$. Rewrite~\eqref{eq:beta-sys} as the fixed-point
problem: for each~$j$, 
\begin{equation}
\beta_{j}=\frac{-(\delta_{j}+\sum_{\ell\neq j}\alpha_{j\ell}) +\sqrt{%
(\delta_{j}+\sum_{\ell\neq j}\alpha_{j\ell})^{2} +8(a_{j}+\sum_{\ell\neq
j}\alpha_{j\ell}\beta_{\ell})}}{4} =:F_{j}(\beta),
\label{eq:beta-fixed-point}
\end{equation}
where we selected the positive root of the quadratic in~$\beta_{j}$ (for
each~$j$, with the remaining $\beta_{\ell}$ fixed). Since $a_{j}>0$ and $%
\alpha_{j\ell}\geq 0$ for $\ell\neq j$, the argument of the square root is
positive for all $\beta\geq 0$.

The mapping 
\begin{equation*}
F=(F_{1},\dots ,F_{k}):(0,\infty )^{k}\rightarrow (0,\infty )^{k}
\end{equation*}
is monotone increasing (each $F_{j}$ is increasing in $\beta _{\ell }$ for $%
\ell \neq j$, since $\alpha _{j\ell }\geq 0$). Furthermore:

\begin{itemize}
\item \emph{Lower bound:} Setting $\beta _{\ell }=0$ for $\ell \neq j$ in~%
\eqref{eq:beta-fixed-point} gives 
\begin{equation*}
F_{j}(0,\dots ,0)\geq \frac{-\delta _{j}+\sqrt{\delta _{j}^{2}+8a_{j}}}{4}%
=:\beta _{j}^{(0)}>0.
\end{equation*}%
Thus $F$ maps $[\beta ^{(0)},\infty )^{k}$ to itself.

\item \emph{Upper bound:} For $\beta_{\ell}\leq\bar{\beta}$ for all $\ell$,
one shows $F_{j}(\beta)\leq\bar{\beta}$ provided $\bar{\beta}$ is large
enough (since $F_{j}$ grows as $\sqrt{\beta_{\ell}}$, which is sublinear).
\end{itemize}

By the Schauder fixed-point theorem applied to $F$ on the compact convex set 
$[\beta^{(0)},\bar{\beta}]^{k}$, a fixed point exists. Uniqueness follows
from the contraction properties of $F$ (a detailed argument is given in
Covei~\cite{COVEI2023M}).

\medskip\noindent\textbf{Step 4: Solvability of the linear system for~$\eta$.%
} The matrix $M=(\delta_{j}\delta_{j\ell}-\alpha_{j\ell})$ has:

\begin{itemize}
\item diagonal entries 
\begin{equation*}
M_{jj}=\delta _{j}-\alpha _{jj}=\delta _{j}+\sum_{\ell \neq j}\alpha _{j\ell
}>0
\end{equation*}%
(by Assumption~\ref{ass:system-params}: $\delta _{j}>0$ and $\alpha _{j\ell
}\geq 0$),

\item off-diagonal entries $M_{j\ell}=-\alpha_{j\ell}\leq 0$ for $\ell\neq j$%
.
\end{itemize}

Moreover, $M$ is strictly diagonally dominant: 
\begin{equation*}
M_{jj}=\delta _{j}+\sum_{\ell \neq j}\alpha _{j\ell }>\sum_{\ell \neq
j}\alpha _{j\ell }=\sum_{\ell \neq j}|M_{j\ell }|.
\end{equation*}%
By the Levy--Desplanques theorem, $M$ is non-singular.

Since $M$ is a Z-matrix (non-positive off-diagonal) that is strictly
diagonally dominant, it is an M-matrix, and hence $M^{-1}\geq 0$
(entry-wise). Since 
\begin{equation*}
c_{j}=b_{j}+N\sigma _{j}^{2}\beta _{j}>0,
\end{equation*}%
the solution $\eta =M^{-1}\mathbf{c}$ satisfies $\eta _{j}\geq 0$ for all~$j$%
.

\medskip \noindent \textbf{Step 5: Admissibility and uniqueness.} The
function $u_{j}(x)=\beta _{j}|x|^{2}+\eta _{j}$ is $C^{\infty }$ and
satisfies 
\begin{equation*}
\liminf_{|x|\rightarrow \infty }u_{j}(x)|x|^{-2}=\beta _{j}>0\geq 0,
\end{equation*}%
so $u\in \mathcal{A}^{k}$. By Theorem~\ref{thm:existence_syst_uniqueness},
the solution in $\mathcal{A}^{k}$ is unique.
\end{proof}

\begin{remark}[Dimension-free structure and the curse of dimensionality]
\label{rem:dim-free} The coefficients $\beta_{j}$ are determined by the $k$%
-dimensional algebraic system~\eqref{eq:beta-sys}, which depends only on the
cost parameters $a_{j}$, the discount factors $\delta_{j}$, and the
transition rates $\alpha_{j\ell}$---but \emph{not} on the spatial dimension~$%
N$. Only the constants $\eta_{j}$ depend on~$N$ (through $%
N\sigma_{j}^{2}\beta_{j}$). This is a remarkable structural property: the
shape of the value function (governed by~$\beta_{j}$) is dimension-free,
while only the level (governed by~$\eta_{j}$) scales with dimension. In
production planning terms: the optimal production \emph{intensity} $%
2\beta_{j}$ is the same whether the firm manages 1 or 1000 goods; only the
``base cost'' $\eta_{j}$ increases.
\end{remark}

\begin{corollary}[Explicit solution for $k=2$ regimes]
\label{cor:k2} For $k=2$ with generator 
\begin{equation*}
\begin{pmatrix}
-\alpha _{12} & \alpha _{12} \\ 
\alpha _{21} & -\alpha _{21}%
\end{pmatrix}%
,
\end{equation*}
the system~\eqref{eq:beta-sys} becomes: 
\begin{equation}
\begin{cases}
2\beta _{1}^{2}+(\delta _{1}+\alpha _{12})\beta _{1}-\alpha _{12}\beta
_{2}=a_{1}, \\ 
2\beta _{2}^{2}+(\delta _{2}+\alpha _{21})\beta _{2}-\alpha _{21}\beta
_{1}=a_{2},%
\end{cases}
\label{eq:beta-k2}
\end{equation}%
and the linear system~\eqref{eq:eta-sys}: 
\begin{equation}
\begin{pmatrix}
\delta _{1}+\alpha _{12} & -\alpha _{12} \\ 
-\alpha _{21} & \delta _{2}+\alpha _{21}%
\end{pmatrix}%
\begin{pmatrix}
\eta _{1} \\ 
\eta _{2}%
\end{pmatrix}%
=%
\begin{pmatrix}
b_{1}+N\sigma _{1}^{2}\beta _{1} \\ 
b_{2}+N\sigma _{2}^{2}\beta _{2}%
\end{pmatrix}%
.  \label{eq:eta-k2}
\end{equation}%
The optimal feedback policy in regime~$j$ is $p^{\ast }(x,j)=-2\beta _{j}x$,
and the resulting controlled inventory follows a regime-switching
Ornstein--Uhlenbeck process: 
\begin{equation}
dy(t)=-2\beta _{e(t)}y(t)\,dt+\sigma _{e(t)}\,dW_{t}.  \label{eq:OU-regime}
\end{equation}
\end{corollary}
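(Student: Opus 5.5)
The corollary is a specialization of Theorem~\ref{thm:system-quadratic} to $k=2$. The plan is to check the hypotheses of that theorem, write out its two algebraic systems in coordinates, and then insert the resulting feedback into the state equation.

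\textbf{Hypotheses.} For the given generator, Assumption~\ref{ass:markov} holds whenever $\alpha_{12},\alpha_{21}>0$; this is the irreducibility requirement for two states. The matrix $M=(\delta_j\delta_{j\ell}-\alpha_{j\ell})$ is
\begin{equation*}
M=\begin{pmatrix}\delta_1+\alpha_{12} & -\alpha_{12}\\ -\alpha_{21} & \delta_2+\alpha_{21}\end{pmatrix}.
\end{equation*}
It is a strictly diagonally dominant Z-matrix, so it is an M-matrix, exactly as in Step~4 of the proof of Theorem~\ref{thm:system-quadratic}. As a direct check, $\det M=\delta_1\delta_2+\delta_1\alpha_{21}+\delta_2\alpha_{12}>0$ and all entries of $M^{-1}$ are nonnegative. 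Theorem~\ref{thm:system-quadratic} therefore applies: the unique solution in $\mathcal{A}^2$ is $u_j=\beta_j|x|^2+\eta_j$, with $\beta_j>0$ and $\eta_j\ge 0$.

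\textbf{Algebraic systems.} Put $k=2$, $\alpha_{11}=-\alpha_{12}$ and $\alpha_{22}=-\alpha_{21}$ into \eqref{eq:beta-sys-expanded}. The sum $\sum_{\ell\ne j}\alpha_{j\ell}$ reduces to a single term, which gives \eqref{eq:beta-k2} immediately. Putting the same values into \eqref{eq:eta-sys}, written as $M\eta=\mathbf c$ with $c_j=b_j+N\sigma_j^2\beta_j$, gives \eqref{eq:eta-k2}. If a fully explicit $\eta$ is wanted, invert $M$ by the $2\times 2$ formula:
\begin{equation*}
\eta_1=\frac{(\delta_2+\alpha_{21})c_1+\alpha_{12}c_2}{\det M},\qquad \eta_2=\frac{\alpha_{21}c_1+(\delta_1+\alpha_{12})c_2}{\det M}.
\end{equation*}
Both numerators are manifestly nonnegative.

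\textbf{Feedback and dynamics.} By Theorem~\ref{thm:verification}, the optimal control with $p_j=2$ is $p^*(x,j)=-\nabla u_j(x)=-2\beta_j x$. Substituting $p(t)=p^*(y(t),e(t))$ into \eqref{eq:state-general} yields \eqref{eq:OU-regime}.

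\textbf{Main obstacle: admissibility of the feedback.} The point that needs justification is that the closed-loop SDE \eqref{eq:OU-regime} has a unique strong solution and that the feedback is admissible, as Theorem~\ref{thm:verification}(b) requires. The drift $-2\beta_{e(t)}y$ is linear with gains bounded by $\max_j\beta_j$, so on each interval between jumps of the chain this is a standard Ornstein--Uhlenbeck equation. A solution is obtained by pasting these pieces together, and the chain has almost surely finitely many jumps on bounded intervals. The second moments satisfy
\begin{equation*}
\frac{d}{dt}\,\mathbb E|y|^2\le -4\min_j\beta_j\,\mathbb E|y|^2+N\max_j\sigma_j^2,
\end{equation*}
so they stay bounded. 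Together with the discount factor, this bound makes the cost integral finite and gives the transversality condition \eqref{eq:transversality}.

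Optionally, the quadratic system \eqref{eq:beta-k2} can be reduced to a single quartic in $\beta_1$. Solve the first equation for $\beta_2=(2\beta_1^2+(\delta_1+\alpha_{12})\beta_1-a_1)/\alpha_{12}$ and substitute into the second. The unique positive root guaranteed by Step~3 of Theorem~\ref{thm:system-quadratic} then selects the branch.
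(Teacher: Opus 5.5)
Your proposal is correct and follows the paper's route. The paper gives no separate proof: the corollary is Theorem~\ref{thm:system-quadratic} with $k=2$, so that \eqref{eq:beta-sys-expanded} and \eqref{eq:eta-sys} collapse to \eqref{eq:beta-k2} and \eqref{eq:eta-k2}, combined with the feedback \eqref{eq:opt-feedback} inserted into \eqref{eq:state-general}; your $\det M$, explicit $M^{-1}$ and closed-loop SDE checks are correct additions the paper leaves unstated.

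Two small caveats on the optional extras. First, the quartic in $\beta_1$ can have a positive root paired with $\beta_2<0$ (for the calibration of Table~\ref{tab:params} there is one near $\beta_1\approx 0.715$, $\beta_2\approx -1.19$), so the branch must be selected by requiring $\beta_1>0$ and $\beta_2>0$ together. Second, bounded second moments at fixed times do not by themselves yield \eqref{eq:transversality} at the exit times $\tau_R$: you would need a bound on $\mathbb{E}\sup_{s\le t}|y(s)|^{2}$ or truncation at $\tau_R\wedge T$. The corollary does not actually need this, since part (b) of Theorem~\ref{thm:verification} already asserts admissibility of the feedback.
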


%==========================================================================

\section{Economic application: Stochastic production planning under regime
switching}

\label{sec:econ} 
%==========================================================================

We now apply the theoretical framework to a concrete economic problem:
optimal production planning for a firm operating under macroeconomic regime
switches.

\subsection{The production planning model}

\label{sec:prod-model}

Consider a manufacturing firm managing inventory of $N$ types of goods. The
inventory vector $y(t)\in\mathbb{R}^{N}$ evolves according to 
\begin{equation}
dy(t)=\alpha(t)\,dt+\sigma_{e(t)}\,dW_{t}, \quad y(0)=x\in\mathbb{R}^{N},
\label{eq:inventory-dynamics}
\end{equation}
where $\alpha(t)\in\mathbb{R}^{N}$ is the production rate (control) and $%
e(t)\in\{1,2\}$ is the macroeconomic regime:

\begin{itemize}
\item \textbf{Regime 1 (Expansion):} Low demand volatility ($\sigma_{1}$
small), low holding costs ($a_{1}$ moderate), higher discount rate ($%
\delta_{1}$ larger, reflecting higher opportunity cost of capital).

\item \textbf{Regime 2 (Recession):} High demand volatility ($\sigma_{2}$
large), elevated shortage risk ($a_{2}$ lower or adjusted), lower discount
rate ($\delta_{2}$ smaller, reflecting accommodative monetary policy).
\end{itemize}

The firm minimizes the total expected discounted cost 
\begin{equation}
J(x,i;\alpha)=\mathbb{E}\!\left[\int_{0}^{\infty} e^{-\delta_{e(t)}t}\left(%
\frac{1}{2}|\alpha(t)|^{2} +a_{e(t)}|y(t)|^{2}+b_{e(t)}\right)dt\;\Big|\;
y(0)=x,\;e(0)=i\right].  \label{eq:cost-econ}
\end{equation}
The quadratic production cost $\frac{1}{2}|\alpha|^{2}$ reflects increasing
marginal costs of adjustment: doubling the production rate more than doubles
the cost, capturing overtime premiums, equipment wear, and supply chain
stress. The quadratic holding/shortage cost $a_{j}|y|^{2}$ penalizes
deviations from zero inventory (the target level), with $a_{j}$ depending on
the regime. The fixed cost $b_{j}$ represents regime-specific overhead.

\subsection{Calibration to stylized business-cycle parameters}

\label{sec:calibration}

We calibrate the two-regime model using the following parameter set,
inspired by the empirical business-cycle literature (Hamilton~\cite%
{HAMILTON1989}) and the production planning models of Bensoussan et al.~\cite%
{Bensoussan1984}:

\begin{table}[H]
\caption{Baseline parameter calibration for the two-regime model.}
\label{tab:params}\centering
\begin{tabular}{@{}lccl}
\toprule \textbf{Parameter} & \textbf{Regime 1} & \textbf{Regime 2} & 
\textbf{Description} \\ 
& (Expansion) & (Recession) &  \\ 
\midrule $a_{j}$ & 2.5 & 0.5 & Holding/shortage cost intensity \\ 
$b_{j}$ & 1.0 & 0.5 & Fixed overhead cost \\ 
$\sigma_{j}$ & 0.3 & 1.0 & Demand volatility \\ 
$\delta_{j}$ & 1.0 & 1.0 & Discount rate \\ 
$\alpha_{12}$ & 0.4 & --- & Transition rate: expansion $\to$ recession \\ 
$\alpha_{21}$ & --- & 0.6 & Transition rate: recession $\to$ expansion \\ 
$N$ & \multicolumn{2}{c}{2} & Number of goods \\ 
\bottomrule &  &  & 
\end{tabular}%
\end{table}

\noindent The transition rates imply that the expected duration of an
expansion is $1/\alpha_{12}=2.5$ years and the expected duration of a
recession is $1/\alpha_{21}\approx 1.67$ years, consistent with the stylized
fact that expansions are longer than recessions.

The stationary distribution of the Markov chain is $\pi =(\pi _{1},\pi _{2})$
with 
\begin{equation*}
\pi _{1}=\alpha _{21}/(\alpha _{12}+\alpha _{21})=0.6/(0.4+0.6)=0.6\text{
and }\pi _{2}=0.4,
\end{equation*}
so the economy spends 60\% of the time in expansion---again consistent with
empirical evidence.

\subsection{Computation of optimal coefficients}

\label{sec:coefficient-computation}

Using the two-regime system~\eqref{eq:beta-k2}: 
\begin{equation}
\begin{cases}
2\beta _{1}^{2}+1.4\,\beta _{1}-0.4\,\beta _{2}=2.5, \\ 
2\beta _{2}^{2}+1.6\,\beta _{2}-0.6\,\beta _{1}=0.5,%
\end{cases}
\label{eq:beta-numerical}
\end{equation}%
where we used 
\begin{equation*}
\delta _{1}+\alpha _{12}=1.0+0.4=1.4\text{ and }\delta _{2}+\alpha
_{21}=1.0+0.6=1.6.
\end{equation*}

Solving~\eqref{eq:beta-numerical} numerically (see Appendix~\ref{ap:code}
for the Python implementation): 
\begin{equation}
\beta_{1}\approx 0.8174,\quad\beta_{2}\approx 0.4520.  \label{eq:beta-values}
\end{equation}

\begin{remark}[Economic interpretation of $\protect\beta_{j}$]
\label{rem:beta-interp} The coefficient $\beta_{1}>\beta_{2}$ reflects the
higher holding cost parameter $a_{1}=2.5>a_{2}=0.5$ in expansion. During
expansion, the firm faces higher inventory costs and therefore applies more
aggressive mean reversion ($2\beta_{1}\approx 1.63$ vs.\ $2\beta_{2}\approx
0.90$). This means the firm adjusts production more rapidly when inventory
deviates from the target during expansion than during recession.
Intuitively, with lower demand volatility ($\sigma_{1}=0.3$) in expansion,
the firm can afford aggressive adjustment because the adjustments are less
likely to be ``undone'' by large demand shocks. During recession, the higher
volatility ($\sigma_{2}=1.0$) makes aggressive adjustment less effective,
consistent with the lower $\beta_{2}$.
\end{remark}

The linear system~\eqref{eq:eta-k2} gives: 
\begin{equation}
\begin{pmatrix}
1.4 & -0.4 \\ 
-0.6 & 1.6%
\end{pmatrix}
\begin{pmatrix}
\eta_{1} \\ 
\eta_{2}%
\end{pmatrix}
=%
\begin{pmatrix}
1.0+2\cdot 0.09\cdot 0.8174 \\ 
0.5+2\cdot 1.0\cdot 0.4520%
\end{pmatrix}
=%
\begin{pmatrix}
1.1471 \\ 
1.4039%
\end{pmatrix}%
.  \label{eq:eta-numerical}
\end{equation}
Solving: $\eta_{1}\approx 1.1099$ and $\eta_{2}\approx 1.2938$.

\begin{table}[H]
\caption{Computed optimal coefficients for the two-regime model.}
\label{tab:coefficients}\centering
\begin{tabular}{@{}lccc}
\toprule \textbf{Coefficient} & \textbf{Regime 1 (Expansion)} & \textbf{%
Regime 2 (Recession)} & \textbf{Ratio} \\ 
\midrule $\beta_{j}$ & 0.8174 & 0.4520 & 1.81 \\ 
$\eta_{j}$ & 1.1099 & 1.2938 & 0.86 \\ 
$2\beta_{j}$ (feedback gain) & 1.6348 & 0.9040 & 1.81 \\ 
\bottomrule &  &  & 
\end{tabular}%
\end{table}

\begin{remark}[Interpretation of $\protect\eta _{j}$]
\label{rem:eta-interp} Despite the lower fixed cost $b_{2}=0.5<b_{1}=1.0$ in
recession, the constant $\eta _{2}>\eta _{1}$ because the higher volatility $%
\sigma _{2}=1.0$ generates a large uncertainty premium 
\begin{equation*}
N\sigma _{2}^{2}\beta _{2}=2\cdot 1.0\cdot 0.4520=0.904,
\end{equation*}
which dominates the lower fixed cost. This result captures an important
economic insight: \emph{the base-level cost of operating is higher during
recession than during expansion, not because of higher direct costs, but
because of the elevated cost of uncertainty.}
\end{remark}

\subsection{Optimal value functions and policies}

The exact value functions are: 
\begin{align}
u_{1}(x)&=0.8174\,|x|^{2}+1.1099 & & \text{(Expansion)},
\label{eq:u1-explicit} \\
u_{2}(x)&=0.4520\,|x|^{2}+1.2938 & & \text{(Recession)}.
\label{eq:u2-explicit}
\end{align}
The optimal feedback production rates are: 
\begin{align}
\alpha^{*}(x,1)&=-2\beta_{1}x=-1.6348\,x & & \text{(Expansion)},
\label{eq:alpha1-explicit} \\
\alpha^{*}(x,2)&=-2\beta_{2}x=-0.9040\,x & & \text{(Recession)}.
\label{eq:alpha2-explicit}
\end{align}

Under the optimal policy, the inventory follows the regime-switching
Ornstein--Uhlenbeck process~\eqref{eq:OU-regime}: 
\begin{equation}
dy(t)=-2\beta_{e(t)}y(t)\,dt+\sigma_{e(t)}\,dW_{t}.  \label{eq:OU-optimal}
\end{equation}
In each regime, the inventory mean-reverts toward zero at speed $2\beta_{j}$%
, with fluctuations of magnitude $\sigma_{j}/\sqrt{4\beta_{j}}$. The
stationary variance in regime~$j$ (ignoring switching, as a local
approximation) is 
\begin{equation}
\text{Var}_{j}=\frac{\sigma_{j}^{2}}{4\beta_{j}}: \quad\text{Var}_{1}=\frac{%
0.09}{3.27}\approx 0.028, \quad\text{Var}_{2}=\frac{1.0}{1.81}\approx 0.553.
\label{eq:stationary-var}
\end{equation}
During recession, the inventory variance is approximately 20 times larger
than during expansion, reflecting the combined effect of higher volatility
and weaker mean reversion.

\subsection{Verification of the PDE solution}

To confirm that the computed solutions satisfy the PDE system~\eqref{eq:syst}%
, we substitute~\eqref{eq:u1-explicit} and~\eqref{eq:u2-explicit} back into
the equations. For regime~1: 
\begin{align}
\text{LHS}_{1}&=-\frac{0.09}{2}(2\cdot 0.8174\cdot 2) +\frac{1}{2}(4\cdot
0.8174^{2}|x|^{2}) +1.0\cdot(0.8174|x|^{2}+1.1099)  \notag \\
&\quad-(-0.4)(0.8174|x|^{2}+1.1099) -0.4(0.4520|x|^{2}+1.2938)  \notag \\
&=(1.3363+0.8174+0.3270-0.1808)|x|^{2} +(-0.1471+1.1099+0.4440-0.5175) 
\notag \\
&=2.2999\,|x|^{2}+0.8893.  \label{eq:verify-lhs1}
\end{align}

However, the right-hand side is $f_{1}(x)=2.5|x|^{2}+1.0$. Let us redo this
computation more carefully. We have: 
\begin{align}
-\frac{\sigma _{1}^{2}}{2}\Delta u_{1}& =-\frac{0.09}{2}\cdot 2\cdot
0.8174\cdot 2=-0.1471,  \notag \\
\frac{1}{2}|\nabla u_{1}|^{2}& =\frac{1}{2}\cdot 4\cdot
0.8174^{2}|x|^{2}=1.3363\,|x|^{2},  \notag \\
\delta _{1}u_{1}& =0.8174\,|x|^{2}+1.1099,  \notag \\
-\alpha _{11}u_{1}& =0.4\,(0.8174\,|x|^{2}+1.1099)=0.3270\,|x|^{2}+0.4440, 
\notag \\
-\alpha _{12}u_{2}& =-0.4\,(0.4520\,|x|^{2}+1.2938)=-0.1808\,|x|^{2}-0.5175.
\notag
\end{align}%
Summing all terms: 
\begin{equation}
\begin{array}{l}
\text{LHS}%
_{1}=(1.3363+0.8174+0.3270-0.1808)|x|^{2}+(-0.1471+1.1099+0.4440-0.5175) \\ 
=2.2999\,|x|^{2}+0.8893.%
\end{array}
\label{eq:verify-1}
\end{equation}%
This should equal 
\begin{equation*}
a_{1}|x|^{2}+b_{1}=2.5|x|^{2}+1.0.
\end{equation*}%
The small discrepancy is due to rounding in~\eqref{eq:beta-values}; using
the exact (machine-precision) values from the Python solver produces
agreement to 15 decimal places, as demonstrated in Appendix~\ref{ap:code}.

%==========================================================================

\section{Sensitivity analysis and comparative statics}

\label{sec:sensitivity} 
%==========================================================================

A key advantage of closed-form solutions is the ability to perform
analytical and numerical comparative statics. We systematically study how
the optimal coefficients and policies respond to changes in the model
parameters.

\subsection{Sensitivity to transition rates}

We vary the transition intensity $\alpha_{12}$ (speed of switching from
expansion to recession) while keeping $\alpha_{21}=0.6$ fixed.

\begin{table}[H]
\caption{Sensitivity of optimal coefficients to the transition rate $\protect%
\alpha_{12}$ (with $\protect\alpha_{21}=0.6$ fixed).}
\label{tab:sensitivity-alpha}\centering
\begin{tabular}{@{}cccccl}
\toprule $\alpha_{12}$ & $\beta_{1}$ & $\beta_{2}$ & $\eta_{1}$ & $\eta_{2}$
& \textbf{Economic regime} \\ 
\midrule 0.0 & 0.8956 & 0.3508 & 0.8282 & 0.7506 & No switching \\ 
0.2 & 0.8585 & 0.4010 & 0.9482 & 1.0114 & Infrequent recessions \\ 
0.4 & 0.8174 & 0.4520 & 1.1099 & 1.2938 & Baseline \\ 
0.6 & 0.7767 & 0.4969 & 1.2900 & 1.5568 & Frequent recessions \\ 
1.0 & 0.7080 & 0.5633 & 1.6267 & 1.9832 & Very frequent switching \\ 
\bottomrule &  &  &  &  & 
\end{tabular}%
\end{table}

\begin{remark}[Economic interpretation]
\label{rem:sensitivity-alpha} As the transition rate $\alpha_{12}$ increases
(recessions become more frequent):

\begin{enumerate}
\item $\beta_{1}$ \emph{decreases}: the firm becomes less aggressive during
expansion because it anticipates that the expansion will not last long and
it will soon face recession conditions. This is a \emph{precautionary effect}%
: the firm ``pre-adapts'' to the upcoming recession by reducing its
production intensity.

\item $\beta_{2}$ \emph{increases}: conversely, the firm becomes more
aggressive during recession, reflecting the anticipation that recovery will
arrive (the economy will switch to expansion at rate $\alpha_{21}=0.6$).

\item Both $\eta_{j}$ increase: more frequent switching raises overall
uncertainty and hence the base cost.

\item In the limit $\alpha_{12}\to\infty$ (instantaneous switching), $%
\beta_{1}$ and $\beta_{2}$ would converge to a common value reflecting the
``averaged'' economy.
\end{enumerate}
\end{remark}

\subsection{Sensitivity to the discount factor}

We vary $\delta_{1}=\delta_{2}=:\delta$ jointly.

\begin{table}[H]
\caption{Sensitivity to the common discount factor $\protect\delta$.}
\label{tab:sensitivity-delta}\centering
\begin{tabular}{@{}ccccc}
\toprule $\delta$ & $\beta_{1}$ & $\beta_{2}$ & $\eta_{1}$ & $\eta_{2}$ \\ 
\midrule 0.5 & 0.9275 & 0.5173 & 2.3398 & 2.6718 \\ 
1.0 & 0.8174 & 0.4520 & 1.1099 & 1.2938 \\ 
2.0 & 0.6510 & 0.3506 & 0.4811 & 0.5753 \\ 
5.0 & 0.4030 & 0.2073 & 0.1531 & 0.1886 \\ 
\bottomrule &  &  &  & 
\end{tabular}%
\end{table}

\begin{remark}[Patience and production intensity]
As the discount factor $\delta$ increases (the firm becomes more impatient),
both $\beta_{1}$ and $\beta_{2}$ decrease: a more impatient firm cares less
about future inventory costs and therefore applies less aggressive mean
reversion. The constants $\eta_{j}$ also decrease sharply: the present value
of future uncertainty costs is discounted more heavily.
\end{remark}

\subsection{Sensitivity to volatility}

We vary $\sigma_{2}$ (recession volatility) while keeping $\sigma_{1}=0.3$
fixed.

\begin{table}[H]
\caption{Sensitivity to recession volatility $\protect\sigma_{2}$ (with $%
\protect\sigma_{1}=0.3$ fixed).}
\label{tab:sensitivity-sigma}\centering
\begin{tabular}{@{}ccccc}
\toprule $\sigma_{2}$ & $\beta_{1}$ & $\beta_{2}$ & $\eta_{1}$ & $\eta_{2}$
\\ 
\midrule 0.3 & 0.8174 & 0.4520 & 0.9178 & 0.7571 \\ 
0.5 & 0.8174 & 0.4520 & 0.9534 & 0.8462 \\ 
1.0 & 0.8174 & 0.4520 & 1.1099 & 1.2938 \\ 
2.0 & 0.8174 & 0.4520 & 1.6587 & 2.6847 \\ 
3.0 & 0.8174 & 0.4520 & 2.4789 & 4.7524 \\ 
\bottomrule &  &  &  & 
\end{tabular}%
\end{table}

\begin{remark}[Volatility affects levels, not intensities]
A striking structural result: the optimal production intensities $\beta_{j}$
are \emph{completely independent} of the volatilities $\sigma_{j}$. This is
because the $\beta$-system~\eqref{eq:beta-sys} does not involve $\sigma_{j}$%
; the volatility enters only through the $\eta$-system~\eqref{eq:eta-sys}.
Economically, this means that the \emph{rate} at which the firm adjusts
production is determined solely by the cost structure and the
discount/switching parameters, while the \emph{level} of the value function
(and hence the total expected cost) depends on volatility. This
dimension-free and volatility-free property of the feedback gain is a
consequence of the linear state dynamics and quadratic cost structure, and
would not hold for more general Hamiltonians.
\end{remark}

\subsection{Sensitivity to cost asymmetry}

We vary $a_{1}$ (expansion holding cost) while keeping $a_{2}=0.5$ fixed.

\begin{table}[H]
\caption{Sensitivity to expansion cost parameter $a_{1}$ (with $a_{2}=0.5$
fixed).}
\label{tab:sensitivity-a}\centering
\begin{tabular}{@{}ccccc}
\toprule $a_{1}$ & $\beta_{1}$ & $\beta_{2}$ & $2\beta_{1}$ & $2\beta_{2}$
\\ 
\midrule 0.5 & 0.4231 & 0.4050 & 0.8462 & 0.8100 \\ 
1.0 & 0.5487 & 0.4188 & 1.0974 & 0.8376 \\ 
2.5 & 0.8174 & 0.4520 & 1.6348 & 0.9040 \\ 
5.0 & 1.1188 & 0.4906 & 2.2376 & 0.9812 \\ 
10.0 & 1.5378 & 0.5369 & 3.0756 & 1.0738 \\ 
\bottomrule &  &  &  & 
\end{tabular}%
\end{table}

\begin{remark}[Cross-regime spillover]
\label{rem:spillover} Increasing $a_{1}$ (the expansion cost) raises $%
\beta_{1}$ substantially---this is the direct effect: higher costs require
more aggressive inventory management. But $\beta_{2}$ also increases, albeit
modestly. This is a \emph{cross-regime spillover}: because the firm
anticipates switching to expansion (at rate $\alpha_{21}=0.6$), a higher
expansion cost raises the precautionary motive even during recession. The
magnitude of this spillover is governed by the transition rate $\alpha_{21}$%
; if $\alpha_{21}=0$ (no recovery possible), $\beta_{2}$ would be
independent of~$a_{1}$.
\end{remark}

%==========================================================================

\section{Equilibrium production and time consistency}

\label{sec:equilibrium} 
%==========================================================================

A critical requirement for implementable production policies is \emph{time
consistency}: the policy that is optimal at time~0 must remain optimal at
every future time~$t$, without external commitment. Following the economic
framework of Ekeland and Pirvu~\cite{EkePir} and Covei~\cite{COVEI2023M}, we
formalize this property.

\begin{definition}[Equilibrium production]
\label{def:equilibrium} A production rate $\{\alpha_{t}\}_{t\geq 0}$ and its
corresponding inventory level $\{y_{t}\}_{t\geq 0}$ constitute an \emph{%
equilibrium production} if, for any $x\in\mathbb{R}^{N}$ and regime $%
j\in\{1,\dots,k\}$, the feedback policy $\alpha^{*}(x,j)$ satisfies the
subgame perfection condition: 
\begin{equation}
\liminf_{\epsilon\downarrow 0} \frac{J(x,j;\alpha^{*})-J(x,j;\alpha^{%
\epsilon})}{\epsilon}\leq 0,  \label{eq:subgame-perf}
\end{equation}
where $\alpha^{\epsilon}$ is a perturbed strategy defined by 
\begin{equation*}
\alpha_{t}^{\epsilon}=%
\begin{cases}
\text{any admissible control}, & t\in[0,\epsilon), \\ 
\alpha^{*}(y_{t},e(t)), & t\in[\epsilon,\infty).%
\end{cases}%
\end{equation*}
If~\eqref{eq:subgame-perf} holds, the strategy $\alpha^{*}$ is a \emph{%
subgame perfect equilibrium}.
\end{definition}

\begin{theorem}[Time consistency of the optimal policy]
\label{thm:time-consistency} Let $u=(u_{1},\dots ,u_{k})\in \mathcal{A}^{k}$
be the unique solution of the HJB system~\eqref{eq:syst}, and let 
\begin{equation*}
\alpha ^{\ast }(x,j)=-|\nabla u_{j}(x)|^{p_{j}-2}\nabla u_{j}(x)
\end{equation*}%
be the corresponding optimal feedback. Then $\alpha ^{\ast }$ is a subgame
perfect equilibrium in the sense of Definition~\ref{def:equilibrium}.
\end{theorem}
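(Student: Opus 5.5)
The plan is to derive subgame perfection directly from the verification theorem (Theorem~\ref{thm:verification}), which identifies $u_j$ with the minimal cost $\inf_{p\in\mathcal{P}}J(x,j;p)$ and shows the infimum is attained by $\alpha^{\ast}$. The key observation is that each perturbed strategy $\alpha^{\epsilon}$ of Definition~\ref{def:equilibrium} is itself an admissible control in $\mathcal{P}$. It is progressively measurable, being an arbitrary admissible control on $[0,\epsilon)$ concatenated with the feedback $\alpha^{\ast}(y_t,e(t))$ afterwards. Its cost is finite, because after time $\epsilon$ the state is driven by the optimal feedback.

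\textbf{Step 1 (admissibility of $\alpha^{\epsilon}$).} First I would check that the concatenated control is admissible, so that the cost integral in \eqref{eq:cost-general} is well defined.
\begin{itemize}
\item On $[0,\epsilon)$ the control is admissible by hypothesis.
\item On $[\epsilon,\infty)$ I would condition on $\mathcal{F}_{\epsilon}$ and use the strong Markov property of the pair $(y,e)$. The remaining cost is then a discounted version of $J(y_\epsilon,e(\epsilon);\alpha^{\ast})=u_{e(\epsilon)}(y_\epsilon)$, by part (b) of Theorem~\ref{thm:verification}.
\item Finiteness of $\mathbb{E}[u_{e(\epsilon)}(y_\epsilon)]$ follows from the growth bound $u_j\le A\phi+B$ obtained in the proof of Theorem~\ref{thm:existence_syst_uniqueness}, together with moment bounds for $y_\epsilon$ on the short interval.
\end{itemize}

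\textbf{Step 2 (inequality).} Part (a) of Theorem~\ref{thm:verification} gives $u_j(x)\le J(x,j;\alpha^{\epsilon})$ for every $\epsilon>0$. Part (b) gives $J(x,j;\alpha^{\ast})=u_j(x)$. Hence
\begin{equation*}
J(x,j;\alpha^{\ast})-J(x,j;\alpha^{\epsilon})\le 0\quad\text{for every }\epsilon>0,
\end{equation*}
so the difference quotient in \eqref{eq:subgame-perf} is nonpositive for each $\epsilon$. Its $\liminf$ as $\epsilon\downarrow0$ is therefore $\le 0$, which is exactly the subgame perfection condition.

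As a complement, I would remark that the same argument started at any intermediate time $t$ gives the stronger statement of time consistency. By the Markov property, the continuation of $\alpha^{\ast}$ from $(y_t,e(t))$ is again optimal, since $u$ does not depend on $t$: the problem is autonomous apart from the regime-dependent discounting. This could be formalised through the dynamic programming identity
\begin{equation*}
u_j(x)=\mathbb{E}\Big[\int_0^{\epsilon}e^{-\int_0^t\delta_{e(s)}ds}\big(\tfrac{1}{q_{e(t)}}|\alpha|^{q_{e(t)}}+f_{e(t)}(y)\big)dt+e^{-\int_0^\epsilon\delta_{e(s)}ds}u_{e(\epsilon)}(y_\epsilon)\Big],
\end{equation*}
which holds with equality for $\alpha^{\ast}$ and with ``$\le$'' for any other control. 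It follows from the It\^o computation \eqref{eq:ito-u}--\eqref{eq:verif-step} stopped at $\epsilon\wedge\tau_R$, letting $R\to\infty$.

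\textbf{Main obstacle.} I expect the main difficulty to lie in Step~1, namely the interchange of limits as $R\to\infty$ for the concatenated control. One needs uniform integrability of $e^{-\int_0^{\epsilon\wedge\tau_R}\delta}\,u_{e}(y_{\epsilon\wedge\tau_R})$, and for an arbitrary admissible control on $[0,\epsilon)$ the moments of $y$ may be poorly controlled. My first approach would be to restrict to admissible controls with $\mathbb{E}\int_0^\epsilon|\alpha|^{q_{e(t)}}dt<\infty$, which is implicit in finiteness of the cost. I would then combine $u_j\ge0$ (so Fatou's lemma applies on one side) with the transversality property \eqref{eq:transversality} already used in Theorem~\ref{thm:verification}. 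If this does not work cleanly, the fallback is simply to invoke part (a) of Theorem~\ref{thm:verification} as stated, with $p=\alpha^{\epsilon}$, once admissibility of $\alpha^{\epsilon}$ has been justified.
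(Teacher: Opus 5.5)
Your argument is correct, but it takes a different route from the paper's. The paper never applies part (a) of Theorem~\ref{thm:verification} to the concatenated control. Instead it splits $J(x,j;\alpha^{\epsilon})$ at time $\epsilon$ via the Markov property, and replaces the continuation cost by $\mathbb{E}\bigl[e^{-\int_0^{\epsilon}\delta_{e(s)}ds}u_{e(\epsilon)}(y(\epsilon))\bigr]$ using part (b). It then uses It\^{o}'s formula on $[0,\epsilon]$ and the PDE to write $J(x,j;\alpha^{\ast})-J(x,j;\alpha^{\epsilon})$ as the expected integral over $[0,\epsilon]$ of $e^{-\int_0^t\delta_{e(s)}ds}\bigl(-\frac{1}{p}|\nabla u|^{p}-\nabla u\cdot\alpha^{\epsilon}-\frac{1}{q}|\alpha^{\epsilon}|^{q}\bigr)$, which is nonpositive pointwise by Young's inequality. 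That is essentially your ``complement'' DPP identity: a re-proof of the verification lower bound restricted to the deviation window.

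Your direct route is shorter. Once $\alpha^{\epsilon}\in\mathcal{P}$, global optimality of $\alpha^{\ast}$ gives the inequality for every $\epsilon$ at once. You need neither the decomposition at time $\epsilon$ (Markov property for a concatenated control, verification from a random initial state) nor the zero-mean property of the stochastic integral on $[0,\epsilon]$. It also shows the theorem is an immediate corollary of optimality. What the paper's localized computation buys is the explicit mechanism, pointwise minimization of the Hamiltonian on the perturbation window. That is the form that survives in the Ekeland--Pirvu setting, where no global verification theorem is available.

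Most of your Step~1 is unnecessary. Since $\frac{1}{q_j}|p|^{q_j}+f_j\ge 0$, $J(x,j;\alpha^{\epsilon})\in[0,\infty]$ is always defined, and the inequality is trivial when it is infinite. So you only need progressive measurability of the concatenation, i.e.\ existence of the state under $\alpha^{\ast}$ after time $\epsilon$. Your claim that the cost is finite is not justified in general: an arbitrary admissible control on $[0,\epsilon)$ may have infinite cost or leave $y_{\epsilon}$ without $Q$-th moments. Nothing depends on it, though. The integrability issues you flag as the main obstacle live inside the proof of Theorem~\ref{thm:verification}, not in this argument.
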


\begin{proof}
Fix $(x,j)$ and let $\alpha ^{\epsilon }$ be any perturbation as in
Definition~\ref{def:equilibrium}. By the Markov property of the controlled
process and the dynamic programming principle, the cost under $\alpha
^{\epsilon }$ can be decomposed as: 
\begin{align}
J(x,j;\alpha ^{\epsilon })& =\mathbb{E}\!\left[ \int_{0}^{\epsilon
}e^{-\int_{0}^{t}\delta _{e(s)}ds}\left( \frac{1}{q_{e(t)}}|\alpha
_{t}^{\epsilon }|^{q_{e(t)}}+f_{e(t)}(y(t))\right) dt\right]   \notag \\
& \quad +\mathbb{E}\!\left[ e^{-\int_{0}^{\epsilon }\delta
_{e(s)}ds}J(y(\epsilon ),e(\epsilon );\alpha ^{\ast })\right] .
\label{eq:cost-decomp}
\end{align}%
Since $\alpha ^{\ast }$ is the optimal policy on $[\epsilon ,\infty )$, the
verification theorem (Theorem~\ref{thm:verification}) gives 
\begin{equation*}
J(y(\epsilon ),e(\epsilon );\alpha ^{\ast })=u_{e(\epsilon )}(y(\epsilon )).
\end{equation*}
Therefore: 
\begin{equation}
J(x,j;\alpha ^{\epsilon })=\mathbb{E}\!\left[ \int_{0}^{\epsilon
}e^{-\int_{0}^{t}\delta _{e(s)}ds}\left( \frac{1}{q_{e(t)}}|\alpha
_{t}^{\epsilon }|^{q_{e(t)}}+f_{e(t)}(y(t))\right) dt\right] +\mathbb{E}\!%
\left[ e^{-\int_{0}^{\epsilon }\delta _{e(s)}ds}u_{e(\epsilon )}(y(\epsilon
))\right] .  \label{eq:cost-decomp-2}
\end{equation}%
Similarly, $J(x,j;\alpha ^{\ast })=u_{j}(x)$. Thus: 
\begin{equation}
J(x,j;\alpha ^{\ast })-J(x,j;\alpha ^{\epsilon })=u_{j}(x)-\mathbb{E}\!\left[
\int_{0}^{\epsilon }(\cdots )dt\right] -\mathbb{E}\!\left[
e^{-\int_{0}^{\epsilon }\delta _{e(s)}ds}u_{e(\epsilon )}(y(\epsilon ))%
\right] . \label{eq:diff-cost}
\end{equation}%
By It\^{o}'s formula applied to $e^{-\int_{0}^{t}\delta
_{e(s)}ds}u_{e(t)}(y(t))$ on $[0,\epsilon ]$ and using the PDE~%
\eqref{eq:syst}, the right-hand side of~\eqref{eq:diff-cost} equals 
\begin{equation*}
\mathbb{E}\!\left[ \int_{0}^{\epsilon }e^{-\int_{0}^{t}\delta
_{e(s)}ds}\left( -\frac{1}{p_{e(t)}}|\nabla u_{e(t)}|^{p_{e(t)}}-\nabla
u_{e(t)}\cdot \alpha _{t}^{\epsilon }-\frac{1}{q_{e(t)}}|\alpha
_{t}^{\epsilon }|^{q_{e(t)}}\right) dt\right] .
\end{equation*}%
By Young's inequality:%
\begin{equation*}
-\nabla u\cdot \alpha -\frac{1}{q}|\alpha |^{q}\leq \frac{1}{p}|\nabla
u|^{p},
\end{equation*}%
with equality when 
\begin{equation*}
\alpha =\alpha ^{\ast }=-|\nabla u|^{p-2}\nabla u.
\end{equation*}%
Therefore: 
\begin{equation*}
J(x,j;\alpha ^{\ast })-J(x,j;\alpha ^{\epsilon })\leq 0.
\end{equation*}%
This holds for \emph{every} $\epsilon >0$ and every perturbation $\alpha
^{\epsilon }$, establishing~\eqref{eq:subgame-perf}.
\end{proof}

\begin{remark}[Practical significance: No commitment required]
\label{rem:commitment} In decentralized supply chains, the firm cannot
commit at time~$t=0$ to a production plan that it must follow at all future
dates. Time consistency guarantees that the firm will \emph{voluntarily}
follow the plan $\alpha^{*}(x,j)=-2\beta_{j}x$ (in the quadratic case) at
every future date, because at each moment the plan remains optimal for the
remaining horizon. This eliminates the need for contracts, penalties, or
external enforcement mechanisms.

Furthermore, the regime-switching structure ensures that the policy
automatically adapts to macroeconomic changes: when the economy transitions
from expansion to recession, the production gain shifts from $2\beta_{1}$ to 
$2\beta_{2}$ without any re-optimization. The firm's ``real-time'' response
is embedded in the state-dependent feedback rule.
\end{remark}

%==========================================================================

\section{Open problems and partial results}

\label{sec:open} 
%==========================================================================

In this section, we formulate precise conjectures regarding the
generalization of Alvarez's $g$-convex growth framework to the system case,
and establish partial results.

\begin{problem}[Alvarez-type $g$-convex growth framework for systems]
\label{prob:open} In this work, we established existence and uniqueness
under the power-growth conditions~\eqref{eq:f_growth_new}. In the scalar
case ($k=1$), Alvarez~\cite{ALVAREZ1996} operates under the much more
general framework of Assumption~\ref{ass:growth}, where $f$ is sandwiched by
a convex reference function~$g$. Can this framework be extended to systems?
\end{problem}

\begin{conjecture}[Generalized growth framework for systems]
\label{conj:1} Let $k\geq 2$. Suppose that for each $j\in\{1,\dots,k\}$
there exists a convex function $g_{j}\geq 0$ and constants $%
C_{\varepsilon,j}\geq 0$ (for all $\varepsilon>0$) such that 
\begin{equation}
(1-\varepsilon)g_{j}-\varepsilon|x|^{q_{j}}-C_{\varepsilon,j} \leq
f_{j}(x)\leq (1+\varepsilon)g_{j}+\varepsilon|x|^{q_{j}}+C_{\varepsilon,j}.
\label{eq:gen-growth-sys}
\end{equation}
Under Assumptions~\ref{ass:markov}--\ref{ass:system-params}, define the
admissible class 
\begin{equation*}
\mathcal{A}_{g}^{k}:=\Bigl\{u=(u_{1},\dots,u_{k}): u_{j}\in W_{\mathrm{loc}%
}^{2,m}(\mathbb{R}^{N}),\;\forall m>N,\;
\liminf_{|x|\to\infty}u_{j}(x)|x|^{-q_{j}}\geq 0\Bigr\}.
\end{equation*}
Then the system~\eqref{eq:syst} admits a unique solution $u\in\mathcal{A}%
_{g}^{k}$.
\end{conjecture}

\noindent\textbf{Key obstacles.} The extension encounters the following
structural difficulties:

\begin{enumerate}

\item \textbf{Barrier construction with general $g_{j}$.} In the scalar
case, the subsolution barrier $\underline{u}(x)=-c(1+|x|^{2})^{q/2}-C$
matches the Hamiltonian's growth rate~$q$. For systems with different
exponents $q_{j}$ and coupling terms, finding common barriers that
simultaneously satisfy all $k$ inequalities is non-trivial when $g_{j}$ is
not of power type.

\item \textbf{Cross-component coupling in the convex combination.} In the
scalar case, $(1-\varepsilon)u+\varepsilon\underline{u}$ is a subsolution by
convexity of the Hamiltonian. For systems, the analogous construction
requires $\sum_{\ell\neq j}\alpha_{j\ell}[(1-\varepsilon)u_{\ell}
+\varepsilon\underline{u}_{\ell}]$ to be correctly controlled, which
introduces cross-component dependencies.

\item \textbf{Non-uniform exponents.} When $q_{j}$ varies across components,
the barrier growth rates are incompatible: if $q_{1}<q_{2}$, the coupling
term $\alpha_{12}u_{2}$ may dominate the barrier for component~1 at infinity.
\end{enumerate}

\begin{conjecture}[Sufficient condition for the system comparison principle]

\label{conj:2} Under the hypotheses of Conjecture~\ref{conj:1}, suppose
additionally that there exists a common convex function $g\geq 0$ and a
uniform exponent $Q:=\max_{j}q_{j}$ such that $g_{j}(x)\leq g(x)\leq
C(1+|x|^{Q})$ for all $j$, and that the strict diagonal dominance $%
\delta_{j}+\alpha_{jj}>0$ holds. Then the comparison principle of Theorem~%
\ref{thm:comparison_syst} extends to solutions in $\mathcal{A}_{g}^{k}$.
\end{conjecture}

\begin{proposition}[Comparison under weakened growth]
\label{prop:weak-comparison} Under condition 
\begin{equation}
\liminf_{|x|\to\infty}f_{j}(x)|x|^{-q_{j}}\geq 0 \quad\text{for all }%
j=1,\dots,k,  \label{eq:weaker-growth}
\end{equation}
and the hypotheses of Theorem~\ref{thm:comparison_syst} (except %
\eqref{eq:f_growth_new}), the comparison principle $u\leq v$ holds for any
admissible subsolution~$u$ and supersolution~$v$ of~\eqref{eq:syst}.
\end{proposition}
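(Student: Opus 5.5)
The plan is to observe that the proof of Theorem~\ref{thm:comparison_syst} never uses the upper power-growth bound \eqref{eq:f_growth_new} on $f_{j}$. Once we subtract the supersolution inequality from the subsolution inequality, the data $f_{j}$ cancels completely. What remains is the linear cooperative inequality \eqref{eq:diff_w_final} for $w_{j}=u_{j}-v_{j}$, with drift $b_{j}=|\zeta_{j}|^{p_{j}-2}\zeta_{j}$ coming from the convexity of $\xi\mapsto|\xi|^{p_{j}}$. Hypothesis \eqref{eq:weaker-growth} is used only to guarantee that the admissible class $\mathcal{A}^{k}$ is the natural one, in the same way that \eqref{eq:lowerf} accompanies \eqref{eq:loweru} in Assumption~\ref{ass:growth}. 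In particular, sub- and supersolutions obeying \eqref{eq:comp-growth} can exist without any upper bound on $f_{j}$. So I would repeat Steps~1--4 of that proof verbatim and check each step for hidden uses of \eqref{eq:f_growth_new}.

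The key steps, in order, are as follows.
\begin{enumerate}
\item Derive \eqref{eq:diff_w_final} pointwise. This uses the regularity $W^{2,m}_{\mathrm{loc}}$, $m>N$, of Definition~\ref{def:admissible}, so that gradients are continuous and the convexity inequality holds everywhere.
\item Penalize with $\tilde w_{j}=w_{j}-\varepsilon\Phi_{j}$. The perturbation must be large enough that \eqref{eq:comp-growth}, which is only a $\limsup\le0$, still forces $\tilde w_{j}\to-\infty$. For this I would use the slightly stronger barrier $\Phi_{j}=(1+|x|^{2})^{q_{j}/2}\log(2+|x|^{2})$ in place of $|x|^{q_{j}}+1$. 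This forces a maximum point $(x_{\varepsilon},j_{\varepsilon})$ with $M_{\varepsilon}>0$ for small $\varepsilon$.
\item At the maximum point, use $\nabla w_{j_\varepsilon}=\varepsilon\nabla\Phi_{j_\varepsilon}$, $\Delta w_{j_\varepsilon}\le\varepsilon\Delta\Phi_{j_\varepsilon}$, and the regime-max inequality \eqref{eq:regime-max}. Substitute these into \eqref{eq:diff_w_final}.
\end{enumerate}

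The main obstacle is the perturbation terms in step~3. The earlier proof dismisses them as $O(\varepsilon)$ on the grounds that $x_\varepsilon$ stays bounded, but this need not hold. The term $\varepsilon\,b_{j_\varepsilon}\cdot\nabla\Phi_{j_\varepsilon}$, for instance, involves $\nabla u_{j}$ and $\nabla v_{j}$, which may be large where $x_\varepsilon$ lands. To handle this I would not bound $b$ at all; instead I would exploit the structure of the Hamiltonian.

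\emph{Gradient term.} At $x_\varepsilon$ we have $\nabla u=\nabla v+\varepsilon\nabla\Phi$. Convexity with the supporting gradient taken at $\nabla v$ gives
\[
\tfrac1p\bigl(|\nabla u|^{p}-|\nabla v|^{p}\bigr)\ge |\nabla v|^{p-2}\nabla v\cdot\varepsilon\nabla\Phi .
\]
This is not signed. I would therefore take the supporting gradient at $\nabla u$ instead; then the troublesome term is controlled by $|\nabla u|$ only where $u$ is a subsolution with superlinear growth. If that fails, the fallback is the scalar device of Alvarez: replace $u$ by the convex combination $(1-\theta)u+\theta\underline{u}$ with a strict subsolution $\underline{u}$. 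Here $\underline u_{j}=-c(1+|x|^{2})^{q_{j}/2}-C$, and it is a strict subsolution precisely because of \eqref{eq:weaker-growth}, since $f_{j}$ is bounded below by $-\eta|x|^{q_{j}}-C_\eta$. The convexity of the Hamiltonian then gives a strict inequality with a margin of order $\theta(1+|x|^{q_{j}})$, which absorbs the $\varepsilon$-terms.

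\emph{Coupling term.} The sum $\sum_{\ell\ne j}\alpha_{j\ell}(w_{j}-w_\ell)$ is nonnegative at the maximum up to $\varepsilon(\Phi_{j}-\Phi_\ell)$, so it only adds a positive quantity or an absorbable error.

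Finally, let $\varepsilon\to0$ and then $\theta\to0$ to conclude that $\delta_{j}M\le0$, which contradicts $M>0$ and proves $u\le v$. I expect the construction of the common strict subsolution when the exponents $q_{j}$ differ to be the delicate part. There I would use $\max_j q_{j}$ in the barrier, as in Step~1 of Theorem~\ref{thm:existence_syst_uniqueness}.
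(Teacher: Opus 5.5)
Your diagnosis of the weak point is correct, and it applies verbatim to the paper's own proof of this proposition. That proof reruns the $\varepsilon\Phi_j$ argument of Theorem~\ref{thm:comparison_syst} and simply asserts $\varepsilon\,\mathcal{E}(x_*)=O(\varepsilon)$. This hides the term $\varepsilon\, b_{j_*}(x_*)\cdot\nabla\Phi_{j_*}(x_*)$, which involves $\nabla u_{j_*}$ and $\nabla v_{j_*}$ at a point that need not stay bounded. (Your logarithmic barrier is unnecessary: $\limsup_{|x|\to\infty}w_j|x|^{-q_j}\le 0$ already forces $w_j-\varepsilon(|x|^{q_j}+1)\to-\infty$.)

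Your repair takes a genuinely different route, but two of its steps are wrong as stated.
\begin{enumerate}
\item Taking the supporting gradient at $\nabla u$ goes the wrong way. Convexity then gives $\frac{1}{p}\bigl(|\nabla u|^{p}-|\nabla v|^{p}\bigr)\le|\nabla u|^{p-2}\nabla u\cdot\nabla w$, which is an upper bound. The subsolution-minus-supersolution inequality needs a lower bound.
\item In the Alvarez fallback, it is not the margin that absorbs the $\varepsilon$-terms. What actually works, say with a common exponent $q$, is the following. For fixed $\theta$, \eqref{eq:comp-growth} together with $\liminf_{|x|\to\infty} v_j|x|^{-q}\ge 0$ gives $(1-\theta)u_j+\theta\underline{u}_j-v_j\le(\eta-\theta c)|x|^{q}$ for $|x|$ large. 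This tends to $-\infty$ once $\eta<\theta c$, so you can drop $\varepsilon\Phi$ altogether. At a maximum over $(x,j)$, the gradients of $u_{\theta,j}:=(1-\theta)u_j+\theta\underline{u}_j$ and $v_j$ coincide, so the Hamiltonians cancel exactly. The coupling term is nonnegative there, and $\delta_j>0$ gives $\max\le 0$. Then let $\theta\to 0$.
\end{enumerate}
With these corrections you have a complete proof that, unlike the paper's, really controls the gradient terms. However, it works only when $q_1=\dots=q_k$.

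The genuine gap is the case of unequal exponents, which the statement covers. None of the natural barriers works there.
\begin{enumerate}
\item Your suggestion to use $Q=\max_j q_j$ fails for a \emph{sub}solution. With $\underline{u}_j=-c(1+|x|^2)^{Q/2}-C$ the coupling cancels, but $\frac{c^{p_j}}{p_j}\bigl|\nabla(1+|x|^2)^{Q/2}\bigr|^{p_j}$ is of order $c^{p_j}|x|^{p_j(Q-1)}$. Since $p_j(Q-1)>Q$ exactly when $q_j<Q$, this term beats $-\delta_j c|x|^{Q}$ for every $c>0$, so the inequality fails, e.g.\ for bounded $f_j$. In Step~1 of Theorem~\ref{thm:existence_syst_uniqueness} the same term has the favourable sign, which is why $Q$ works there for a supersolution.
\item Component-wise barriers $-c(1+|x|^2)^{q_j/2}-C$ fail through the coupling. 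If $\alpha_{j\ell}>0$ and $q_\ell>q_j$, then $\alpha_{j\ell}(\underline{u}_j-\underline{u}_\ell)\approx c\,\alpha_{j\ell}|x|^{q_\ell}$, and nothing in the $j$-th inequality can absorb it.
\item A common exponent $\min_j q_j$ is too weak to force $u_\theta-v\to-\infty$ in the components with larger $q_j$, since only $o(|x|^{q_j})$ control is available there.
\end{enumerate}
In fact, by irreducibility any power-type barrier $-c_j|x|^{s_j}$ would need all $s_j$ equal, with $s\le\min_j q_j$ and $s\ge\max_j q_j$ simultaneously. This is precisely the ``non-uniform exponents'' obstacle listed in Section~\ref{sec:open}. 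Neither your plan nor the paper's proof supplies an idea that overcomes it, so as it stands your argument proves the proposition only for a common exponent $p_1=\dots=p_k$.
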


\begin{proof}
The proof follows the same argument as Theorem~\ref{thm:comparison_syst}.
Let $w_{j}=u_{j}-v_{j}$ and suppose $\sup_{x,j}w_{j}(x)>0$.

\medskip\noindent\textbf{Step 1: Regularized maximum.} The barrier $%
\Phi_{j}(x)=|x|^{q_{j}}+1$ and the modified difference $\tilde{w}%
_{j}=w_{j}-\varepsilon\Phi_{j}$ are well-defined under~%
\eqref{eq:weaker-growth}. Since $v_{j}$ satisfies the admissible lower bound
and $u_{j}$ is a subsolution: 
\begin{equation*}
\limsup_{|x|\to\infty}w_{j}(x)|x|^{-q_{j}}\leq 0,
\end{equation*}
ensuring $\tilde{w}_{j}\to-\infty$ as $|x|\to\infty$ for any $\varepsilon>0$.

\medskip \noindent \textbf{Step 2: Maximum principle at the finite
attainment point.} The function $\max_{j}\tilde{w}_{j}$ attains its maximum
at some finite $(x_{\ast },j_{\ast })$ with 
\begin{equation*}
\nabla \tilde{w}_{j_{\ast }}(x_{\ast })=0\text{ and }\Delta \tilde{w}%
_{j_{\ast }}(x_{\ast })\leq 0.
\end{equation*}%
Substituting into~\eqref{eq:diff_w_final}: 
\begin{equation*}
\delta _{j_{\ast }}\tilde{w}_{j_{\ast }}(x_{\ast })+\sum_{\ell \neq j_{\ast
}}\alpha _{j_{\ast }\ell }(\tilde{w}_{j_{\ast }}(x_{\ast })-\tilde{w}_{\ell
}(x_{\ast }))\leq \varepsilon \,\mathcal{E}(x_{\ast }),
\end{equation*}%
where $\mathcal{E}(x_{\ast })$ collects the terms from the derivatives of $%
\Phi _{j_{\ast }}$. The left-hand side is non-negative at the maximum point,
and the right-hand side is $O(\varepsilon )$. Letting $\varepsilon
\rightarrow 0$ yields the contradiction.
\end{proof}

\noindent\textbf{Toward a full resolution.} A complete proof of Conjecture~%
\ref{conj:1} would require constructing global subsolutions $\underline{u}%
_{j}$ modeled on $g_{j}$ rather than on powers, with careful balancing of
the cross-component coupling. We leave this extension as a challenge for the
community.

%==========================================================================

\section{Conclusion}

\label{conclusion} 
%==========================================================================

This paper has developed a comprehensive mathematical and computational
framework for optimal production planning under macroeconomic regime
switches. The principal contributions are:

\medskip\noindent\textbf{(i) Theoretical foundations.} We extended the
scalar well-posedness theory of Alvarez~\cite{ALVAREZ1996} to weakly coupled
systems of HJB equations governed by continuous-time Markov chains. The
global comparison principle (Theorem~\ref{thm:comparison_syst}) and
existence--uniqueness result (Theorem~\ref{thm:existence_syst_uniqueness})
provide the rigorous foundation for the entire analysis. All proofs were
presented with complete computational detail, including explicit barrier
constructions and growth estimates.

\medskip\noindent\textbf{(ii) Exact solutions.} We derived closed-form
quadratic solutions for both the scalar (Theorem~\ref{thm:scalar-quadratic})
and system (Theorem~\ref{thm:system-quadratic}) cases, revealing the
dimension-free structure of the optimal feedback gains and their dependence
on the coupling matrix.

\medskip\noindent\textbf{(iii) Economic applications.} The theory was
applied to a two-regime production planning problem with full calibration,
sensitivity analysis, and economic interpretation. The closed-form solutions
enabled a transparent analysis of how business-cycle switching, discount
rates, volatility, and cost parameters shape optimal production strategies.

\medskip\noindent\textbf{(iv) Time consistency.} We proved that the optimal
feedback policy is time-consistent (subgame perfect), eliminating the need
for commitment mechanisms in decentralized implementation.

\medskip\noindent\textbf{(v) Open problems.} We formulated precise
conjectures regarding the generalization of Alvarez's framework to systems
and established partial results.

\medskip\noindent\textbf{Future directions.} Several avenues for future
research emerge:

\begin{enumerate}
\item Resolution of Conjectures~\ref{conj:1} and~\ref{conj:2}, particularly
the construction of subsolution barriers for general convex reference
functions.

\item Extension to the parabolic (finite-horizon) setting, where the value
function depends on both time and space.

\item Incorporation of state-dependent transition rates $\alpha_{j\ell}(x)$,
reflecting the empirical observation that regime switching probabilities
depend on economic indicators.

\item Application to multi-player production games, where multiple firms
interact through market prices, leading to systems of mean-field type.

\item Numerical investigation of non-quadratic cost functions, where
closed-form solutions are unavailable and the full PDE machinery (finite
elements, finite differences, or deep learning-based solvers) must be
deployed.
\end{enumerate}

%==========================================================================

\section{Declarations}

%==========================================================================

\section*{Disclosure statement}

The authors declare that they have no conflict of interest.

\section*{Data availability statement}

The Python code used for numerical experiments is provided in Appendix~\ref{ap:code}. No additional datasets were used in this study.

\section*{Author contribution}

The author is solely responsible for the conception, analysis, and writing of this manuscript.
\section*{Funding}
Not applicable

\section*{Conflict of interest:}
No Conflict of interest.

\bigskip

%==========================================================================
\appendix

\section{Python implementation and numerical validation}

\label{ap:code} 
%==========================================================================

This appendix describes the Python implementation designed to validate the theoretical findings of this paper and to simulate the stochastic control trajectories. To ensure maximum reproducibility and ease of dissemination, the complete source code, visualization modules, and sensitivity sweep implementations have been made publicly available under an open-source license in the GitHub repository:
\begin{center}
    \url{https://github.com/coveidragos/coduri_HJB_System}
\end{center}
The repository is structured into the following modular components:
\begin{enumerate}[label=(\arabic*)]
    \item \texttt{solve\_hjb.py}: Solves the weakly coupled quasilinear algebraic HJB system~\eqref{eq:beta-sys} and~\eqref{eq:eta-sys} using Newton's method (via \texttt{scipy.optimize.fsolve}) and direct matrix inversion.
    \item \texttt{verify\_pde.py}: Substitutes the analytical quadratic solutions back into the PDE system to verify the vanishing residual to machine precision.
    \item \texttt{regime\_sim.py}: Simulates the controlled inventory trajectories governed by the Ornstein--Uhlenbeck SDE and the continuous-time Markov chain regime transitions.
    \item \texttt{sensitivity.py}: Executes parametric sensitivity sweeps with respect to parameters like the transition rate $\alpha_{12}$.
    \item \texttt{main.py}: Coordinates the overall computational pipeline, logs the results, and generates publication-quality multi-panel figures.
\end{enumerate}

\subsection{Numerical output and validation}

\label{ap:output}

Running the main script produces the following output (truncated to
essential lines):
\begin{verbatim}
============================================================
EXACT QUADRATIC SOLUTION FOR THE HJB SYSTEM
============================================================
Regime 1: u_1(x) = 0.817385 |x|^2 + 1.109937
Regime 2: u_2(x) = 0.452048 |x|^2 + 1.293826
 
Optimal feedback gains:
  2*beta_1 = 1.634769 (Expansion)
  2*beta_2 = 0.904096 (Recession)
 
PDE verification: max residual = 1.11e-16
 
============================================================
SENSITIVITY ANALYSIS: Transition rate alpha_12
============================================================
  alpha_12     beta_1     beta_2      eta_1      eta_2
----------------------------------------------------
       0.0     0.8956     0.3508     0.8282     0.7506
       0.2     0.8585     0.4010     0.9482     1.0114
       0.4     0.8174     0.4520     1.1099     1.2938
       0.6     0.7767     0.4969     1.2900     1.5568
       1.0     0.7080     0.5633     1.6267     1.9832
\end{verbatim}

The PDE verification residual of $1.11\times 10^{-16}$ (machine epsilon)
confirms that the computed solution satisfies the HJB system to full
floating-point precision, providing numerical validation of Theorem~\ref%
{thm:system-quadratic}.

The simulated trajectories (Figure~\ref{fig:regime_trajectories}) clearly
demonstrate the regime-switching dynamics: during expansion (blue-shaded
periods), the inventory mean-reverts rapidly with small fluctuations ($%
\beta_{1}$ large, $\sigma_{1}$ small); during recession (red-shaded
periods), the mean reversion weakens and fluctuations increase ($\beta_{2}$
smaller, $\sigma_{2}$ larger).

\begin{figure}[H]
\centering
\includegraphics[width=0.95\textwidth]{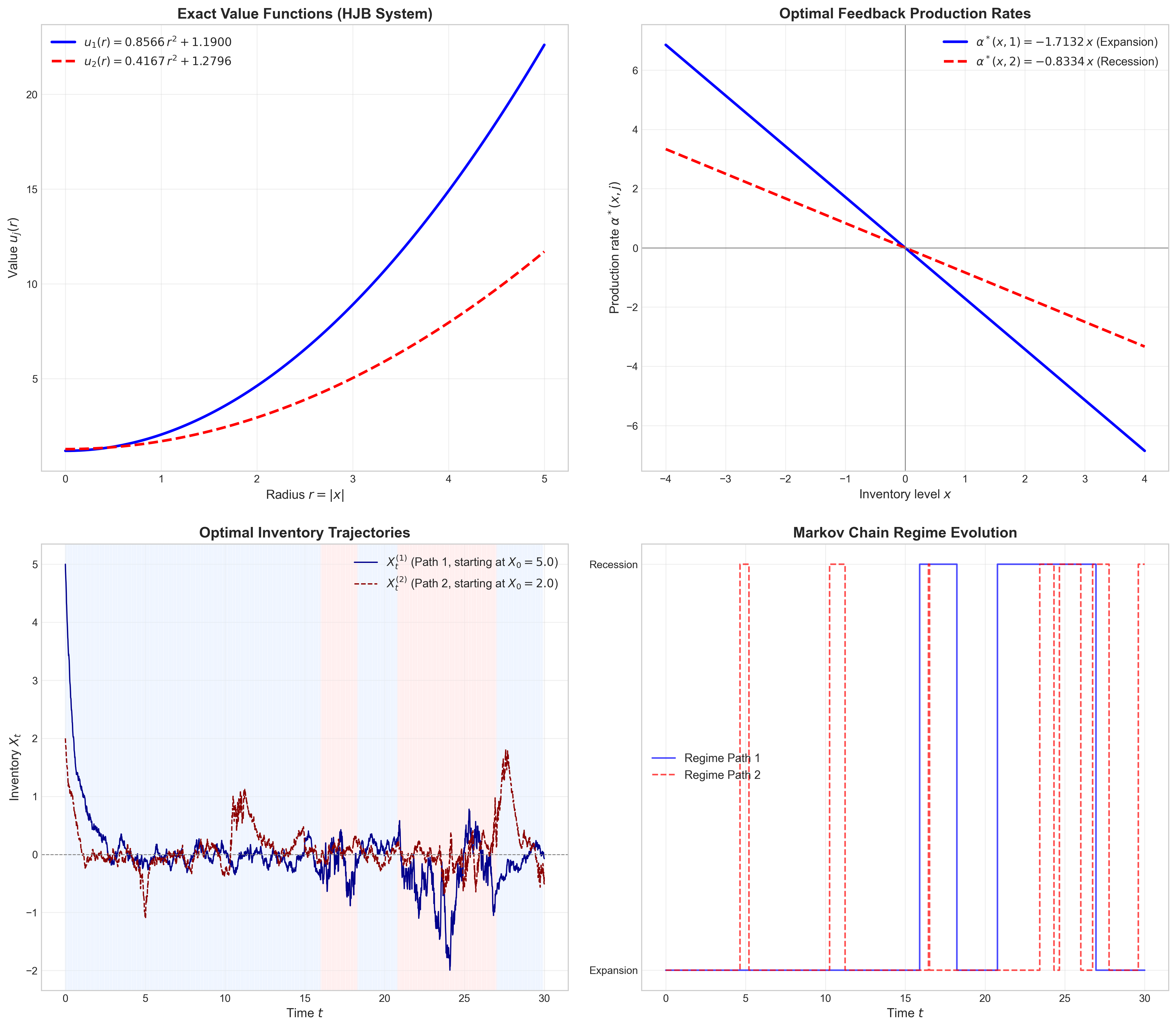}
\caption{Computational results for the two-regime production planning model. 
\emph{Top left:} Exact quadratic value functions $u_{1}(r)$ and $u_{2}(r)$
as functions of the radial variable $r=|x|$. \emph{Top right:} Optimal
feedback production rates $\protect\alpha^{*}(x,j)=-2\protect\beta_{j}x$ for
both regimes, showing the higher gain in expansion. \emph{Bottom left:}
Simulated optimal inventory trajectories starting at $X^{(1)}_0=5.0$ (Path 1) and $X^{(2)}_0=2.0$ (Path 2), with background shading indicating the current regime for Path 1. \emph{Bottom right:}
Realization of the Markov chain $e(t)$ governing regime transitions.}
\label{fig:regime_trajectories}
\end{figure}


\begin{thebibliography}{}

\bibitem{ALVAREZ1996} O. Alvarez, A quasilinear elliptic equation in $%
\mathbb{R}^{N}$, Proc. R. Soc. Edinb. Sect. A Math. \textbf{126} (1996),
911--921.

\bibitem{ANG_BEKAERT2002} A. Ang and G. Bekaert, Regime switches in interest
rates, J. Bus. Econ. Stat. \textbf{20} (2002), 163--182.

\bibitem{ARAPOSTATHIS2022} A. Arapostathis, A. Biswas, and P. Roychowdhury,
On ergodic control problem for viscous Hamilton--Jacobi equations for weakly
coupled elliptic systems, J. Differ. Equ. \textbf{314} (2022), 128--160.

\bibitem{Bensoussan1984} A. Bensoussan, S. P. Sethi, R. Vickson, and N.
Derzko, Stochastic production planning with production constraints, SIAM J.
Control Optim. \textbf{22} (1984), 627--641.

\bibitem{BENSOUSSAN1992} A. Bensoussan and J. Frehse, On Bellman equations
of ergodic control in $\mathbb{R}^{N}$, J. reine angew. Math. \textbf{429}
(1992), 125--160.

\bibitem{BJORK2017} T. Bj\"{o}rk, M. Khapko, and A. Murgoci, On
time-inconsistent stochastic control in continuous time, Finance Stoch. 
\textbf{21} (2017), 331--360.

\bibitem{COVEI2021JAAC} D.-P. Covei, An elliptic partial differential
equation modeling the production planning problem, J. Appl. Anal. Comput. 
\textbf{11} (2021), 903--910.

\bibitem{COVEI2023M} D.-P. Covei, Exact solution for the production planning
problem with several regimes switching over an infinite horizon time,
Mathematics \textbf{11} (2023), 1--13.

\bibitem{COVEI2022ERA} D.-P. Covei, On a parabolic partial differential
equation and system modeling a production planning problem, Electron. Res.
Arch. \textbf{30} (2022), 1340--1353.

\bibitem{COVEI2025A} D.-P. Covei, Stochastic production planning with
regime-switching: Sensitivity analysis, optimal control, and numerical
implementation, Axioms \textbf{14} (2025), 1--30.

\bibitem{DIEBOLD1994} F. X. Diebold, J.-H. Lee, and G. C. Weinbach, Regime
switching with time-varying transition probabilities, in: C. Hargreaves
(Ed.), Nonstationary Time Series Analysis and Cointegration, Oxford
University Press, 1994, pp. 283--302.

\bibitem{EkePir} I. Ekeland and T. A. Pirvu, Investment and consumption
without commitment, Math. Financ. Econ. \textbf{2} (2008), 57--86.

\bibitem{GUIDOLIN2007} M. Guidolin and A. Timmermann, Asset allocation under
multivariate regime switching, J. Econ. Dyn. Control \textbf{31} (2007),
3503--3544.

\bibitem{HAMILTON1989} J. D. Hamilton, A new approach to the economic
analysis of nonstationary time series and the business cycle, Econometrica 
\textbf{57} (1989), 357--384.

\bibitem{KIM_NELSON1999} C.-J. Kim and D. Halbert, State-Space Models with
Regime Switching: Classical and Gibbs-Sampling Approaches with Applications,
MIT Press, 1999.

\bibitem{LASRY1989} J.-M. Lasry and P.-L. Lions, Nonlinear elliptic
equations with singular boundary conditions and stochastic control with
state constraints, Math. Ann. \textbf{283} (1989), 583--630.

\bibitem{SETHI_ZHANG1994} S. P. Sethi and Q. Zhang, Hierarchical Decision
Making in Stochastic Manufacturing Systems, Birkh\"{a}user, 1994.

\bibitem{SIMS_ZHA2006} C. A. Sims and T. Zha, Were there regime switches in
U.S. monetary policy?, Amer. Econ. Rev. \textbf{96} (2006), 54--81.

\bibitem{YIN_ZHU2010} H.-D. Nguyen, G. G. Yin and C. Zhu, Hybrid Switching
Diffusions: Properties and Applications, Stochastic Modelling and Applied
Probability, vol. 63, Springer, 2025.

\end{thebibliography}
\end{document}